\newcommand{\sgn}{\mathrm{sign}}
\newcommand{\Int}{\mathrm{Int}}
\newtheorem {proposition} {Proposition}
\newtheorem {corollary} {Corollary}
\newtheorem {remark} {Remark}
\begin{document}
\renewcommand{\arraystretch}{1.5}

\title[Properties of the Poincar\'{e} half-maps]
{Properties of  Poincar\'{e} half-maps for planar linear systems and some direct applications to periodic orbits of piecewise systems}

\author[V. Carmona, F. Fern\'andez-S\'{a}nchez, E. Garc\'{\i}a-Medina and D. D. Novaes]
{Victoriano Carmona$^{1},$ Fernando Fern\'andez-S\'{a}nchez$^2,$ Elisabeth Garc\'{\i}a-Medina$^3$  and Douglas D. Novaes$^{\ast 4}$}

\address{$^{1}$ Dpto. Matem\'{a}tica Aplicada II \& IMUS, Universidad de Sevilla, Escuela Polit\'ecnica Superior.
Calle Virgen de \'Africa 7, 41011 Sevilla, Spain.} 
\email{vcarmona@us.es}

\address{$^2$ Dpto. Matem\'{a}tica Aplicada II \& IMUS, Universidad de Sevilla, Escuela T\'{e}cnica Superior de Ingenier\'{i}a.
Camino de los Descubrimientos s/n, 41092 Sevilla, Spain.} 
\email{fefesan@us.es} 

\address{$^3$ Dpto. Matem\'{a}tica Aplicada II, Universidad de Sevilla, Escuela Polit\'ecnica Superior.
Calle Virgen de \'Africa 7, 41011 Sevilla, Spain.} 
\email{egarme@us.es}

\address{$^{\ast 4}$ Departamento de Matem\'{a}tica - Instituto de Matem\'{a}tica, Estat\'{i}stica e Computa\c{c}\~{a}o Cient\'{i}fica (IMECC) - Universidade
Estadual de Campinas (UNICAMP), \ Rua S\'{e}rgio Buarque de Holanda, 651, Cidade Universit\'{a}ria Zeferino Vaz, 13083-859, Campinas, SP,
Brazil.} \email[Corresponding author]{ddnovaes@unicamp.br}

\subjclass[2010]{34A25, 34A26, 34A36, 34C05}

\keywords{Piecewise planar linear systems; Poincar\'e half-maps; Taylor series expansion; Newton-Puiseux series expansion}

\maketitle

\begin{abstract}
This paper deals with fundamental properties of Poincar\'e half-maps defined on a straight line for planar linear systems. Concretely, we focus on the analyticity of the Poincar\'e half-maps, their series expansions (Taylor and Newton-Puiseux) at the tangency point and at infinity, the relative position between the graph of Poincar\'e half-maps and the bisector of the fourth quadrant, and the sign of their second derivatives. All these properties are essential to understand the dynamic behavior of planar piecewise linear systems. Accordingly, we also provide some of their most immediate, but non-trivial, consequences regarding periodic orbits.
\end{abstract}

\newpage

\section{Introduction}\label{sec:Introduction}
The study of the qualitative properties of distinguished solutions of piecewise linear differential systems rests mainly on the analysis of the features of Poincar\'e maps, which are defined as composition of transition maps between the separation manifolds. Sometimes these transition maps are called Poincar\'e half-maps. The linearity of the system in each zone invites to its integration, which automatically causes the emergence of a wide range of cases due to the nature of the different spectra of the matrices of the linear systems and the relative position between the equilibria, if any, and the separation manifolds. The number of cases to be studied is high even for planar systems with two zones of linearity. Moreover, the direct integration leads to different nonlinear equations where the flight time appears as a new variable. 

Since the publication of the seminal work by Freire et al. \cite{FreireEtAl98}, a large number of interesting papers have appeared in order to establish the dynamical behavior in planar piecewise linear systems with two zones of linearity and, in particular, to give conditions for the existence and stability of limit cycles and to provide an optimal bound for the number of coexisting limit cycles (see for instance, \cite{FreireEtAl2013,Huan:2021ti,HuanYang2013,HuanYang2014,LI2021101045,LiLLibre2020, LlibreEtAl15, LlibreEtAl2008}). None of these papers considers all the possible cases. Moreover, they are forced to use individualized approaches to study the different kind of functions that arise due to the distinct spectra of the matrices. This causes that a same result is usually expressed in different terms and, sometimes, it may be a hard task to obtain a common and brief statement for it. Thus, the use of individualized techniques for each case does not allow a unified view of several properties of the Poincar\'e half-maps and, when it does, more effort is required to complete the case-by-case study and to achieve independent statements of these cases. 

This paper relies on a novel characterization of Poincar\'e half-maps for planar linear systems \cite{CARMONA2021319} which allows us to see the properties of these maps from a common point of view and to prove the results in a simple way, without the need of making particularized case-by-case studies. Accordingly, we will not have any of the disadvantages mentioned in the previous paragraphs because this novel characterization does not require integration of the systems and, therefore, the distinction of the spectra of the matrices is not needed. The strength of this approach can be seen in \cite{Carmona21}, where the uniqueness of limit cycles for continuous piecewise linear systems was provided in a simple and synthesized way.

In the framework of the study of Poincar\'e half-maps for planar linear systems, the most relevant properties are those related to the local behavior at tangency points between the flow and the Poincar\'e section, the behavior at infinity (obviously, in the case of the focus or center), and the sign of the derivatives. Some of these properties have been proven just for concrete cases. Even those which are valid for all situations have been proven in a large case-by-case study. This manuscript is primarily devoted to simplifying and unifying the proofs of these properties by considering all possible scenarios simultaneously. In addition, it will be stated an interesting fact about the relative position between the graphs of Poincar\'e half-maps and the bisector of the fourth quadrant. Among other things, from this property it is direct that Poincar\'e half-maps inherit the expansion/compression behavior of the flow of the planar linear system. Additionally, it is proven here that this relative position  is also related to the (constant) sign of its second derivative.

As might be expected from the first two paragraphs of this introduction, all the previously commented properties have direct applications to planar piecewise linear systems; from the analysis of stability and bifurcations of equilibria, singularities, or the infinity, to the existence and characterization of periodic orbits and the obtention of uniform bounds to the number of limit cycles. In this work, some straightforward conclusions concerning the periodic behavior are obtained.

The paper is organized as follows. Section \ref{sec:char} presents the integral characterization of Poincar\'e half-maps for planar linear systems given in \cite{CARMONA2021319}. Two basic consequences of this characterization for the Poincar\'e half-maps are their analyticity and their understanding as solutions of a differential equation. In Section \ref{ssec:orin-inf}, we summarize the results on analyticity of the Poincar\'e half-maps given in \cite{CARMONA2021319} and obtain the Taylor and Newton-Puiseux series expansions at tangency points and infinity by means of the differential equation. Section \ref{ssec:relativeposition} studies the relationship between the graphs of Poincar\'e half-maps and the bisector of the fourth quadrant, which is used to establish, in Section \ref{sec:derivadasegunda}, the sign of the second derivatives of the Poincar\'e half-maps. Finally, Section \ref{sec:consequences} addresses the analysis of the periodic behavior of planar piecewise linear systems with two zones separated by a straight line. There, some direct consequences of the properties of Poincar\'e half-maps obtained in previous sections are stated.
 
 \section{Integral characterization for the Poincar\'{e} half-maps}\label{sec:char}

Let us consider, for $\mathbf{x}=(x_1,x_2)^T$, the autonomous linear system 
\begin{equation}
\label{eq:sislinnohom}
    \dot{\mathbf{x}}=A\, \mathbf{x}+\mathbf{b}
\end{equation}
where $A=(a_{ij})_{i,j=1,2}$ is a real matrix and $\mathbf{b}=(b_1,b_2)^T\in\mathbb{R}^2$. Let us choose, without loss of generality, the Poincar\'e section $\Sigma=\{(x_1,x_2)\in \mathbb{R}^2:x_1=0\}$.

Notice that if the coefficient $a_{12}$ vanishes, system (\ref{eq:sislinnohom}) is uncoupled and a Poincar\'e half-map on section $\Sigma$ cannot be defined. 
Hence, let us assume in this work that $a_{12}\ne0$ (observability condition \cite{CarmonaEtAl02}). On the one hand, observe that, among other configurations, this condition removes the possibility of star-nodes to appear. On the other hand, under the assumption $a_{12}\neq0$, the linear change of variable $x=x_1$, 
$
y=a_{22} x_1-a_{12} x_2-b_1,
$
with $a=a_{12}b_2-a_{22}b_1$, allows to write system \eqref{eq:sislinnohom}  into the generalized Li\'{e}nard form,
 \begin{equation}\label{lienard}
\left(\begin{array}{l} \dot{x}\\ \dot{y} \end{array}\right)=\left(\begin{array}{rr} T & -1\\
D& 0\end{array}\right) \left(\begin{array}{l} x\\ y \end{array}\right)-\left(\begin{array}{l}0\\a\end{array}\right),
\end{equation}
where $T$ and $D$ stand for the trace and the determinant of matrix $A$, respectively. 
 In the new coordinates, since $x_1=x$, Poincar\'e section $\Sigma$ remains the same.

The first equation of system \eqref{lienard} evaluated on the section $\Sigma$ is reduced to $\dot{x}|_{\Sigma}=-y$. Therefore, the flow of the system
crosses $\Sigma$ from the half-plane $\Sigma^+=\{(x,y)\in \mathbb{R}^2:x>0\}$ to the half-plane $\Sigma^-=\{(x,y)\in \mathbb{R}^2:x<0\}$ when $y>0$, from $\Sigma^-$ to $\Sigma^+$ when 
$y<0$, and it is tangent to $\Sigma$ at the origin.

Since this work is devoted to Poincar\'e half-maps of system \eqref{lienard} corresponding to the section $\Sigma$ and due to the fact that there is no possible return to section $\Sigma$ when $a=D=0$, we assume that  $a^2+D^2\ne 0$ throughout this 
work. Note that this condition avoids the existence of a continuum of equilibria.

We are going to focus on the left Poincar\'{e} half-map (the one defined by the flow in the closed half-plane $\Sigma^-\cup\Sigma$ and the intersection points of its orbits with the Poincar\'e section $\Sigma$). Notice that the definition of the right Poincar\'{e} half-map and their corresponding results may be immediately obtained by the invariance of system \eqref{lienard} under the change $(x,y,a)\leftrightarrow (-x,-y,-a)$.

The left Poincar\'e half-map is usually defined in the following way.
Let us consider $(0,y_0)\in\Sigma$ with $y_0\geqslant0$ and let 
$
\Phi(t;y_0)=(\Phi_1(t;y_0),\Phi_2(t;y_0))
$
the solution of system \eqref{lienard} that satisfies the initial condition $\Phi(0;y_0)=(0,y_0)$. The existence of a value $\tau(y_0)>0$ such that $\Phi_1(\tau(y_0);y_0)=0$ and $\Phi_1(t;y_0)<0$ for every $t\in(0,\tau(y_0))$ allows to define the image of $y_0$ by the left Poincar\'e half-map as $P(y_0)=\Phi_2(\tau(y_0);y_0)\leqslant0$. Moreover, the value $\tau(y_0)$ is called the left flight time. 

Regarding the definition of the left Poincar\'e half-map at the origin, $P(0)$ cannot be defined as above when for every $\tau>0$ there exists $t\in(0,\tau)$ such that $\Phi_1(t;0)>0$. However, it can be continuously extended as $P(0)=0$ provided that for every $\varepsilon>0$ there exist $y_0\in(0,\varepsilon)$ and $y_1\in(-\varepsilon,0)$ such that $P(y_0)=y_1$. This finishes the usual definition of the left Poincar\'{e} half-map.

According to the above definition, it is natural to compute the flow of the system by means of explicit integrations. This leads to many case-by-case studies and forces the nonlinear appearance of the flight time. Here, we will use a characterization that avoids the computation of the flow, as it is done in  \cite{CARMONA2021319}. For the sake of completeness, we give a brief summary of the main results and ideas of \cite{CARMONA2021319} that are going to be used in this paper.

The left Poincar\'{e} half-map $P$ 
and its definition interval $I$ are given in Theorem 19 and Corollary 21 of \cite{CARMONA2021319}. By using the quadratic polynomial function
\begin{equation}\label{eq:poly}
W(y)=Dy^2-aTy+a^2,
\end{equation}
the {\it left Poincar\'{e} half-map}  is the unique function $P: I\subset [0,+\infty) \longrightarrow(-\infty,0]$
that, for every $y_0\in I$, satisfies 
\begin{equation}\label{integralF}
\operatorname{PV}\left\{\int_{P(y_0)}^{y_0}\dfrac{-y}{W(y)}dy\right\}= cT, 
\end{equation}
where $c$ is given, in terms of the parameters, as follows: (i) $c=0$ if $a>0$, (ii) $c=\pi\left(D\sqrt{4D-T^2}\right)^{-1}\in \mathbb{R}$ if $a=0,$ and (iii) $c=2\pi\left(D\sqrt{4D-T^2}\right)^{-1}\in \mathbb{R}$ if $a<0$. Here, $\operatorname{PV}\{\cdot\}$ stands for the {\it Cauchy Principal Value} at the origin  (see, for instance, \cite{henrici}), which is defined as
\[
\operatorname{PV}\left\{\int_{y_1}^{y_0}\dfrac{-y}{W(y)}dy\right\}=\lim_{\varepsilon\searrow 0} \left(\int_{y_1}^{-\varepsilon}\dfrac{-y}{W(y)}dy+\int_{\varepsilon}^{y_0}\dfrac{-y}{W(y)}dy\right),
\]
 for $y_1<0<y_0.$

As emphasized in \cite{CARMONA2021319}, the interval $I$ is essentially related with the roots of the quadratic polynomial function $W$. In the next remark, we shall briefly comment some of those relationships and other interesting properties of $P$ which are proven in \cite{CARMONA2021319}. 

\begin{remark}
\label{remark:intervlas}

System \eqref{lienard}, under the assumed condition $a^2+D^2\neq0$, has invariant straight lines for several values of the parameters.  These straight lines are either the invariant eigenspaces of equilibria (saddles, degenerate nodes or non-degenerate nodes) or the straight line $y=Tx+a/T$ in the case $T\neq0$, $D=0$ (what implies $a\neq0$).
In those cases, every invariant straight line intersects the Poincar\'e section $\Sigma$ in a point $(0,\mu)$, where $\mu$ is a root of  the quadratic polynomial function $W$ given in \eqref{eq:poly}. Moreover, when $I\subset [0,+\infty)$  is bounded, then the right endpoint of $I$ is a real root of  $W$ and, in the same way, if $P(I)$ is bounded, then the left endpoint of $P(I)$  is also a real root of  $W$. In Fig.~\ref{fig:halfmaps} (a) and Fig.~\ref{fig:halfmaps}(b), we show two examples of bounded intervals $I$ and/or $P(I)$, corresponding respectively to saddle and non-degenerate node configurations.

The interval $I$ can be unbounded. For instance, if $4D-T^2>0$, then the equilibrium point of system \eqref{lienard} is a focus or a center, 
the intervals $I$ and $P(I)$ are unbounded,
and, obviously, $P(y_0)$  tends to $-\infty$ as $y_0\to +\infty$. In this case, the intervals are $I=[0,+\infty)$ and $P(I)=(-\infty,0]$,
except when the equilibrium is a focus (i.e. $T\ne 0$) and it is located in the left half-plane $\left\{ (x,y)\in\mathbb{R}^2: x<0 \right\}$ (i.e. $a<0$). In fact, when $T>0$, the interval $P(I)$ is reduced to $(-\infty,\hat y_1]$, where  $\hat y_1=P(0)$ (see Fig.~\ref{fig:halfmaps} (c)).  Analogously,  for $T<0$, $I=[\hat y_0, +\infty)$ with $\hat y_0=P^{-1}(0)$ (see Fig.~\ref{fig:halfmaps} (d)).

Finally, the polynomial function $W$ is strictly positive in each set $[P(y_0),0)\cup(0,y_0 ]$,  with $y_0\in I$. Besides that, since $W(0)=a^2$, then $W(0)>0$ for $a\ne 0$ and $W(0)=0$ for $a=0$.

\end{remark}
\begin{figure}
\[
\begin{array}{cc}
\includegraphics[width=0.45\linewidth]{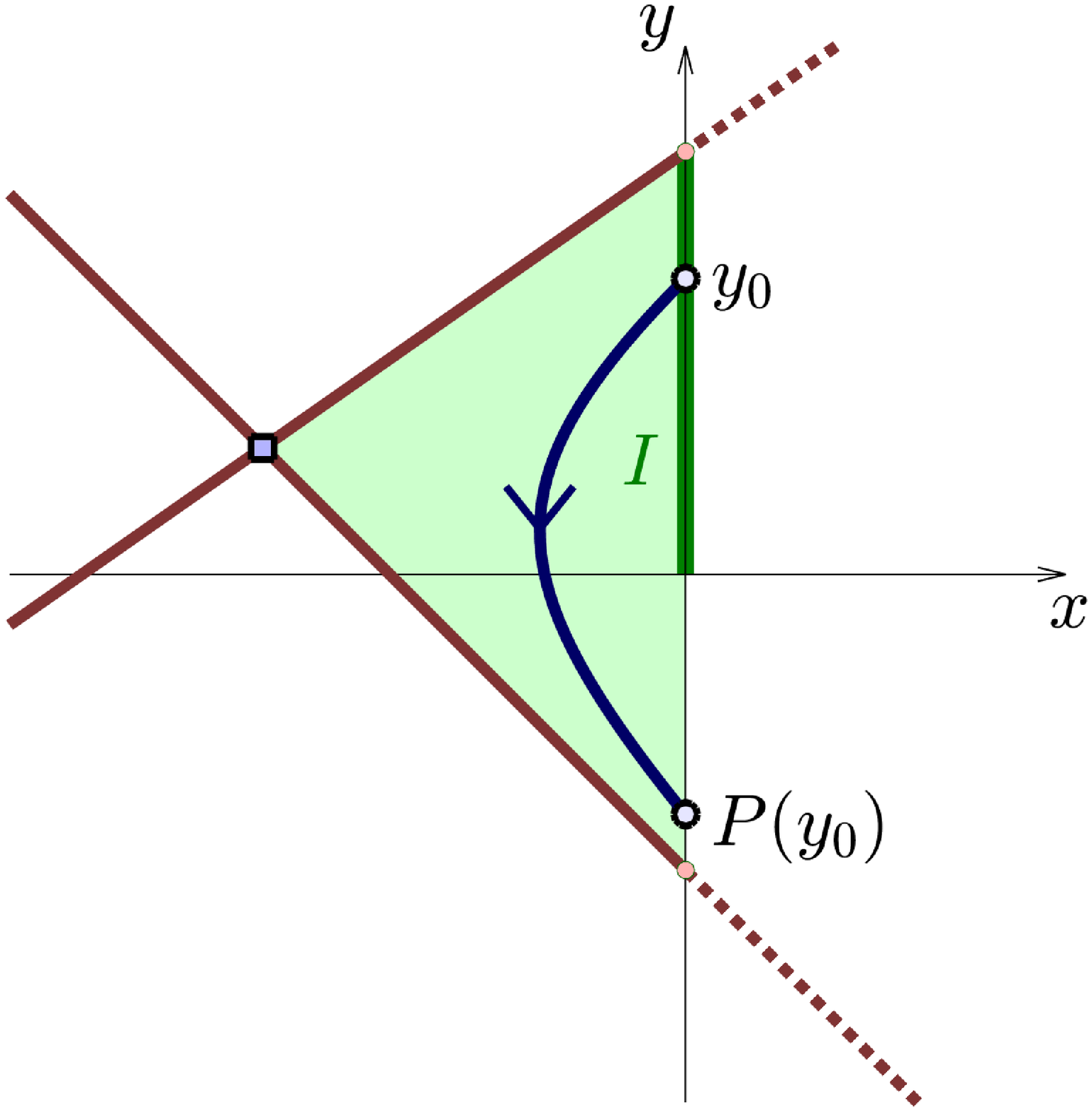}&
\includegraphics[width=0.45\linewidth]{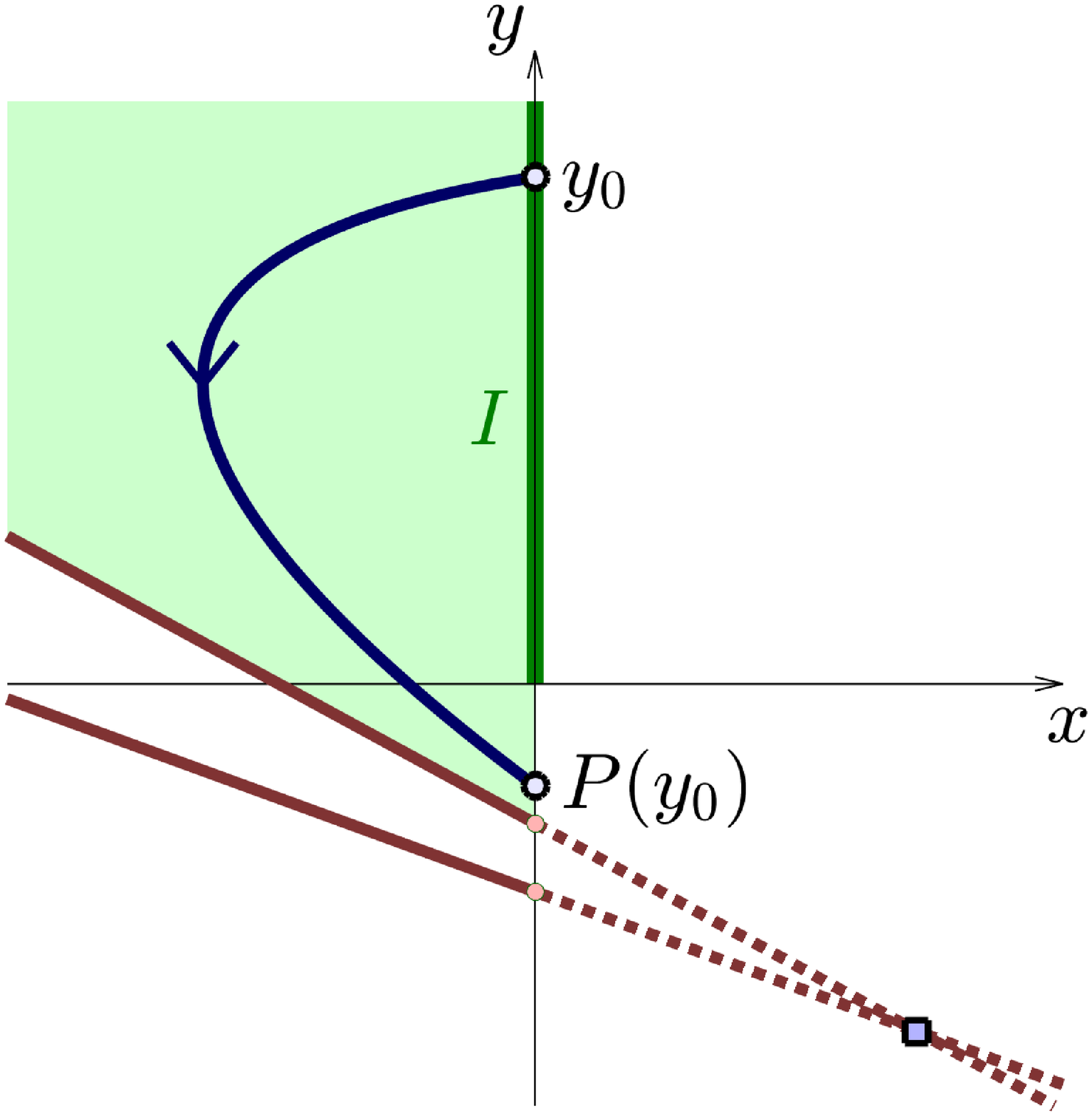}\\
\text{(a)}&\text{(b)}\\
\includegraphics[width=0.45\linewidth]{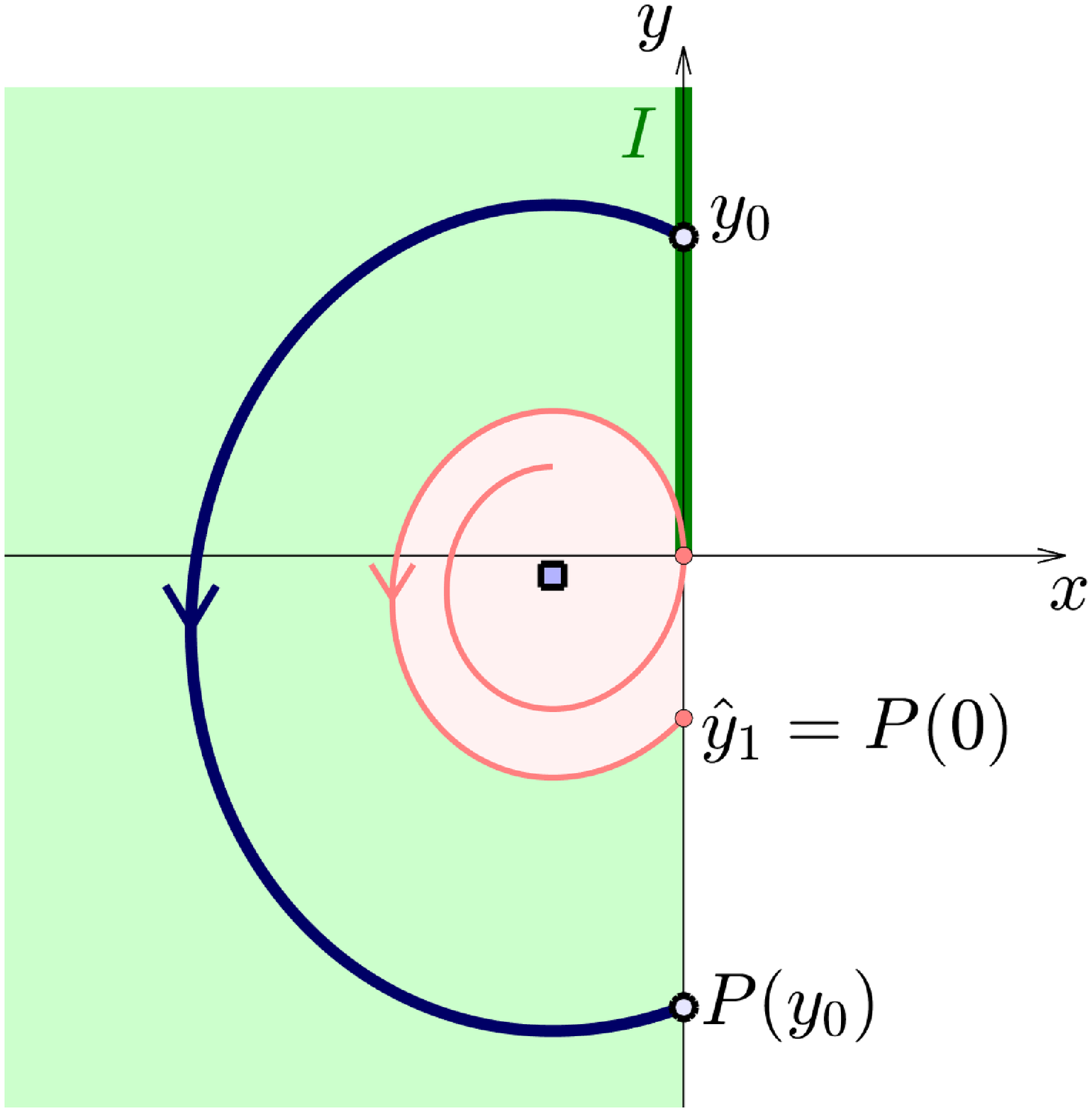}&
\includegraphics[width=0.45\linewidth]{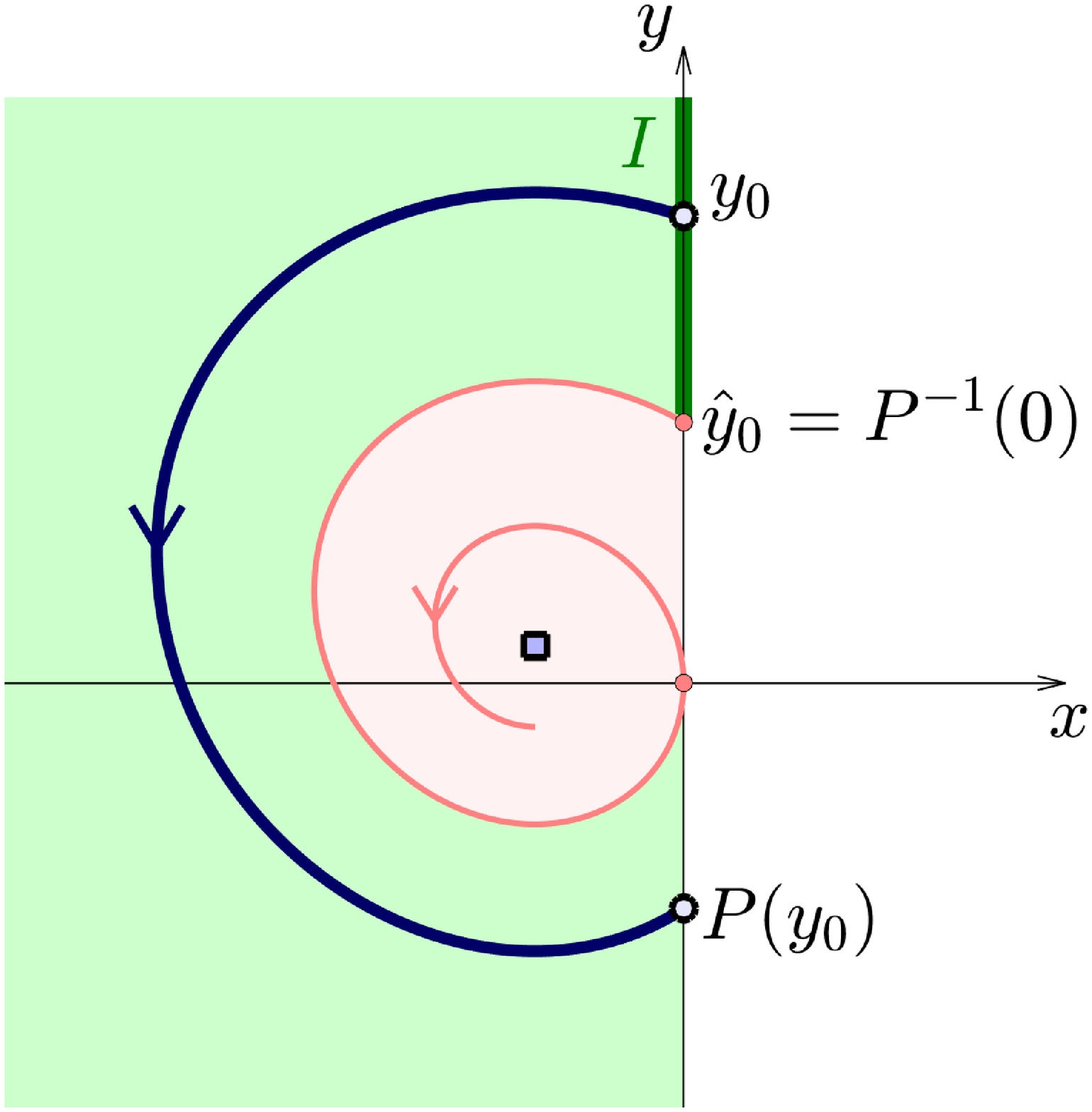}\\
\text{(c)}&\text{(d)}
\end{array}
\]
\caption{
The left Poincar\'e half-map $P$ and its interval of definition $I$ for the cases: (a) saddle, (b) non-degenerate node,
(c) unstable focus, and (d) stable focus. 
\label{fig:halfmaps}}
\end{figure}

It is worth mentioning that the integral given in \eqref{integralF} diverges when $a=0$ and the Cauchy principal value is necessary to overcome this difficulty. Moreover, in this case, for $y_1<0<y_0$, the Cauchy principal value at the origin is given by
\begin{equation}
\label{eq:PVvalue}
\operatorname{PV}\left\{\int_{y_1}^{y_0}\dfrac{-y}{Dy^2}dy\right\}=\lim_{\varepsilon\searrow 0} \left(\int_{y_1}^{-\varepsilon}\dfrac{-y}{Dy^2}dy+\int_{\varepsilon}^{y_0}\dfrac{-y}{Dy^2}dy\right)=\frac{1}{D}\log\left| \frac{y_1}{y_0}\right|.
\end{equation}
When $a\neq0,$ the integrating function $h(y)=-y/W(y)$ is continuous and, consequently, the Cauchy principal value just takes the value of the integral.

\section{Analiticity and series expansions of Poincar\'{e} half-maps at the tangency point and its preimage, and at infinity}\label{ssec:orin-inf}

In this section, by means of the integral characterization and a subsequent  differential equation, we shall compute the first coefficients of the Taylor expansion of the left Poincar\'{e} half-map $P$. Obviously, the used method does not depend on the spectrum of the matrix of the system. Before obtaining these coefficients it is necessary to determine the analyticity of the left Poincar\'{e} half-map $P$.  

When $P(y_0)\ne 0$, it is well-known (see, for example, \cite{Chicone82}) that the transversality between the flow of the system and the separation line $\Sigma$ ensures the analyticity of $P$ at $y_0$. The analyticity for the  tangency point between the flow and $\Sigma$ (that is, the origin) is more intricate and, in the literature, it has been approached  with a case-by-case study (see some partial results at \cite{COLL2001671}). However, as follows from Corollary 24 of \cite{CARMONA2021319}, the maps $P$ and $P^{-1}$ are real analytic functions in the open intervals $\operatorname{int}(I)$ and $P(\operatorname{int}(I))$, respectively, and at least one of the following statements is true: 
\begin{enumerate}
\item[(i)] the map $P$ is a real analytic function at the left endpoint of its domain, 
\item[(ii)] the map $P^{-1}$ is a real analytic function at the right endpoint of its domain.
\end{enumerate}

When the equilibrium of system \eqref{lienard} is a center or a focus, the left Poincar\'{e} half-map $P$ can be considered also at infinity. In addition, we shall obtain the first coefficients of the Taylor expansion of $P$  around the infinity. 

A first consequence from the definition of the left Poincar\'{e} half-map given in the integral form \eqref{integralF} is easily deduced by computing the derivative with respect to variable $y_0$ (see Remark 16 of \cite{CARMONA2021319}). Hence, one can see that the graph of the left Poincar\'{e} half-map $P$ and its inverse function $P^{-1}$, oriented according to increasing $y_0$, are particular orbits of the 
cubic vector field
\begin{equation*}\label{dy1}
\begin{array}{lcl}
X(y_0,y_1)&=&-\big(y_1W(y_0) ,y_0 W(y_1)\big)= \\
 \noalign{\medskip}
& & -\big(y_1\big(Dy_0^2-aTy_0+a^2\big) ,y_0 \big(Dy_1^2-aTy_1+a^2 \big)\big).
\end{array}
\end{equation*}
In fact,  the left Poincar\'{e} half-map $P$ and its inverse function $P^{-1}$ are solutions of the differential equation \begin{equation}
\label{eq:ode}
y_1W(y_0)dy_1-y_0W(y_1)dy_0=0.
\end{equation}

\vspace*{0.35cm}

The next proposition is a direct consequence of the results in \cite{CARMONA2021319} and allows to obtain the Taylor expansion of $P$ around the origin when $a\ne 0$ and $P(0)=0$. Notice that for $a=0$, the existence of  the left Poincar\'e half-map $P$ implies $4D-T^2>0$. From Remark  \ref{remark:intervlas}, the interval of definition of $P$ is $I=[0,+\infty)$ and, for $y_0\geqslant 0$, expression \eqref{integralF} can be written as 
\[
\operatorname{PV}\left\{\int_{P(y_0)}^{y_0}\dfrac{-y}{Dy^2}dy\right\}= \frac{\pi T}{D\sqrt{4D-T^2}}.
\]
Hence, by using the value for $\operatorname{PV}$ given in \eqref{eq:PVvalue}, the left Poincar\'{e} half-map $P$ is given by
\begin{equation}
\label{eq:y-1Ffora=0}
P(y_0)=-\exp\left(\frac{\pi T}{\sqrt{4D-T^2}} \right)y_0,\quad \mbox{for} \,\, y_0\geqslant 0.
\end{equation}

When $a\ne0$, by denoting $\mathcal{I}=\left\{ y\in\mathbb{R}: W(y)>0\right\}$, from Theorem 14 of \cite{CARMONA2021319}, it is deduced that  the set
\[
\mathcal{C}_0=\left\{ (y_1,y_0)\in\mathcal{I}^2: \int_{y_1}^{y_0}-y/W(y)dy=0\right\}
\]
can be written in the form
\[
\mathcal{C}_0=\left\{ (y_1,y_0)\in\mathcal{I}^2: (y_1-y_0)(y_1-\varphi_0(y_0)\right\},
\]
where $\varphi_0$ is a real analytic function in $\mathcal{I}$ which is also an involution, that is, $(\varphi_0 (\varphi_0(y_0)))=y_0$ 
for all $y_0\in \mathcal{I}$. Now, by means of Corollary 21 of  \cite{CARMONA2021319}, it follows that the left Poincar\'{e} half-map $P$ coincides with the function $\varphi_0$ restricted to the interval $\mathcal{I}\cap [0,\infty]$, provided $0\in I$ and $P(0)=0$. By an abuse of notation, we say that the Poincar\'{e} half-map $P$ is an involution when $a\ne0$, $0\in I$, and $P(0)=0$.

\begin{proposition}\label{prop:sequ}
Assume that $a\neq0$ and $0\in I$. If $P(0)=0$, then left Poincar\'{e} half-map $P$ is a real analytic function in $I$, it is an involution and its Taylor expansion around the origin writes as
\begin{equation}
\label{eq:maclaurin}
\begin{array}{ccl}
P(y_0)& =& {\displaystyle -y_0-\frac{2 T y_0^2}{3 a}-\frac{4 T^2 y_0^3}{9
   a^2}+\frac{2 \left(9 D T-22 T^3\right) y_0^4}{135
   a^3}+} \\
   \noalign{\medskip}
 & &    
   {\displaystyle\frac{4 \left(27 D T^2-26 T^4\right) y_0^5}{405
   a^4}-\frac{2 \left(27 D^2 T-176 D T^3+100 T^5\right)
   y_0^6}{945 a^5}+\mathcal{O} \left(y_0^7\right). }
  \end{array} 
\end{equation}

\end{proposition}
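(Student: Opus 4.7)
The plan is to treat the three claims of Proposition \ref{prop:sequ}---real analyticity on $I$, the involution identity, and the explicit Taylor expansion---in turn, leaning on the results already quoted in the excerpt for the first two and using the differential equation \eqref{eq:ode} for the third.

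For analyticity and the involution property, I would appeal directly to the discussion immediately preceding the proposition: under $a\neq 0$, $0\in I$, and $P(0)=0$, the characterization distilled from Corollary 21 of \cite{CARMONA2021319} identifies $P$ with the restriction of $\varphi_0$ to $\mathcal{I}\cap[0,\infty)$, where $\varphi_0$ is the real analytic involution of $\mathcal{I}$ extracted from the level set $\mathcal{C}_0$. Since $W(0)=a^2>0$, a whole neighborhood of the origin lies inside the open set $\mathcal{I}$, so $P$ inherits analyticity and the involution identity at $0$; combining this with Corollary 24 of \cite{CARMONA2021319} for the interior of $I$ gives analyticity of $P$ throughout $I$, and $P\circ P=\mathrm{id}$ on $I$ transfers directly from $\varphi_0$.

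For the Taylor expansion, I would rewrite \eqref{eq:ode} in functional form as
\begin{equation*}
P(y_0)\,W(y_0)\,P'(y_0)=y_0\,W(P(y_0)),
\end{equation*}
and substitute the ansatz $P(y_0)=\sum_{k\geq 1}c_k y_0^k$. Differentiating the involution relation $P(P(y_0))=y_0$ at the fixed point $0$ yields $P'(0)^2=1$, while the orientation $P(I)\subset(-\infty,0]$ forces $c_1=P'(0)=-1$. Expanding $W(y_0)=a^2-aTy_0+Dy_0^2$ and composing $W(P(y_0))$ as a power series, the coefficients of $y_0$ on the two sides match automatically (both equal $a^2$), and for each $n\geq 2$ the coefficient of $y_0^n$ yields a linear equation of the form $c_n\,a^2+R_n(c_2,\ldots,c_{n-1};a,T,D)=0$. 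Because $a\neq 0$, this triangular recursion determines $c_2,c_3,\ldots$ uniquely from lower-order data and the parameters $a$, $T$, $D$.

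The only real obstacle is the bookkeeping at higher orders: obtaining $c_6$ requires carrying the product $P\cdot P'\cdot W(y_0)$ and the composition $W(P(y_0))$ accurately up to order $y_0^6$, which is mechanical but error-prone by hand. A symbolic algebra check would round out the verification and ensure that the coefficients printed in \eqref{eq:maclaurin} are reproduced.
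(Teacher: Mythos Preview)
Your proposal is correct and follows essentially the same route as the paper: analyticity and the involution property are drawn from the results of \cite{CARMONA2021319} summarized just before the proposition (the paper cites Theorem~14 and Corollary~24 there), the value $P'(0)=-1$ is read off from the involution together with the sign constraint $P(y_0)\leqslant 0$, and the remaining coefficients are obtained by plugging a power-series ansatz into the differential equation \eqref{eq:ode} and solving the resulting triangular system. Your remark that the higher-order bookkeeping is best confirmed by a computer algebra system is fair and matches the spirit of the paper's ``via undetermined coefficients'' comment.
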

\begin{proof}
From the hypotheses of the proposition and by means of Theorem 14 and Corollary 24 of \cite{CARMONA2021319}, it is deduced that left Poincar\'{e} half-map $P$ 
is a real analytic function in $I$ and it is an involution. Hence, the derivative of $P$ at the origin is $P'(0)=-1$.

Now, taking into account that $P$ is a solution of the differential equation given in \eqref{eq:ode}, it is easy to obtain, via undetermined coefficients, the Taylor expansion given in \eqref{eq:maclaurin} and so the proof is concluded. 
\end{proof}
Notice that the Taylor expansion around the origin given in Proposition \ref{prop:sequ} was already obtained in \cite{MedradoTorregrosa15}. Although the calculations are not fully detailed in that work, the authors rely on the results given in  \cite{COLL2001671}, where the study requires different techniques depending on the situations. Before  \cite{MedradoTorregrosa15},  the same series expansion was obtained in \cite{FreireEtAl12}, by means of an inversion of the flight time, but only for the focus case.

When $0\in I$ and $P(0)\ne 0$, the function $P$ is a real analytic function at the origin and it is possible to obtain its  Taylor expansion of $P$ around the origin.
\begin{proposition}\label{prop:serietaylor}
Assume that 
$0\in I$. If $P(0)=\hat{y}_1<0$, then $a<0,T>0,$ $4D-T^2>0$, $\hat{y}_1$ is the right endpoint of the interval $P(I)$, and the left Poincar\'{e} half-map $P$ is a real analytic function in $I$ and its Taylor expansion around the origin writes as
\begin{equation}
\label{eq:serietaylor}
\begin{array}{r@{}c@{}l}
P(y_0)&=&{\displaystyle \hat{y}_1+\frac{W\left(\hat{y}_1\right) y_0^2}{2 a^2 \hat{y}_1}+\frac{T W\left(\hat{y}_1\right)
   y_0^3}{3 a^3 \hat{y}_1}-\frac{\left(a^2+\left(D-2 T^2\right)
   \hat{y}_1^2\right) W\left(\hat{y}_1\right) y_0^4}{8 a^4
   \hat{y}_1^3}} \\
   \noalign{\medskip}
   & - & 
   {\displaystyle 
   \frac{T \left(5 a^2+\left(7 D-6
   T^2\right) \hat{y}_1^2 W\left(\hat{y}_1\right)\right) y_0^5}{30 a^5
   \hat{y}_1^3}} \\
   \noalign{\medskip}
   &+ & 
   {\displaystyle
   \frac{\left(9 a^4-6 a^3 T \hat{y}_1+2 a^2 \left(9
   D-13 T^2\right) \hat{y}_1^2+\left(9 D^2-46 D T^2+24
   T^4\right) \hat{y}_1^4\right) W\left(\hat{y}_1\right) y_0^6}{144 a^6
   \hat{y}_1^5}} \\
    \noalign{\medskip}
   &+ &  
   \mathcal{O}\left(y_0^7\right).
   \end{array}
\end{equation}
\end{proposition}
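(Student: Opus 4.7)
My plan is threefold: derive the sign conditions $a<0$, $T>0$, $4D-T^2>0$ from the integral characterization \eqref{integralF}, identify $\hat y_1$ as the right endpoint of $P(I)$ via Remark~\ref{remark:intervlas}, and then obtain both the analyticity of $P$ at the origin and the Taylor coefficients from the differential equation \eqref{eq:ode}.

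For the sign conditions, I would evaluate \eqref{integralF} at $y_0=0$, obtaining
\[
\operatorname{PV}\!\left\{\int_{\hat y_1}^{0}\frac{-y}{W(y)}\,dy\right\}=cT.
\]
The case $a=0$ is ruled out by formula \eqref{eq:y-1Ffora=0}, which would then force $P(0)=0$ contrary to $\hat y_1<0$. With $a\neq 0$, Remark~\ref{remark:intervlas} implies $W>0$ on $[\hat y_1,0]$, so the integrand $-y/W(y)$ is continuous and strictly positive on $[\hat y_1,0)$, and the Cauchy principal value reduces to an ordinary positive integral. This forces $cT>0$, which immediately eliminates $a>0$ (where $c=0$), so $a<0$; the expression for $c$ in the $a<0$ regime then requires $4D-T^2>0$ (hence $D>0$ and $c>0$), and $cT>0$ forces $T>0$. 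Under these conditions, system \eqref{lienard} has an unstable focus in $\Sigma^-$, and Remark~\ref{remark:intervlas} identifies $P(I)=(-\infty,\hat y_1]$ with $\hat y_1=P(0)$, as claimed.

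Analyticity at $0$ is the subtler point, because the ODE \eqref{eq:ode} immediately yields $P'(0)=0$ and so the standard implicit-function argument at the endpoint is unavailable. I would instead rewrite \eqref{eq:ode} explicitly as
\[
\frac{dy_1}{dy_0}=\frac{y_0\,W(y_1)}{y_1\,W(y_0)}
\]
and observe that the right-hand side is analytic at $(0,\hat y_1)$ because the denominator $y_1 W(y_0)$ takes the nonzero value $\hat y_1 a^2$ there. The analytic Cauchy--Lipschitz theorem then supplies a unique real analytic solution through $(0,\hat y_1)$, and $C^1$ uniqueness forces it to coincide with $P$ on a one-sided neighborhood of $0$; combined with the analyticity on $\operatorname{int}(I)$ provided by Corollary~24 of \cite{CARMONA2021319}, this gives $P$ real analytic on all of $I$. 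To extract \eqref{eq:serietaylor}, I would substitute the ansatz $P(y_0)=\hat y_1+\sum_{k\geq 1}c_k y_0^k$ into the polynomial form $P(y_0)W(y_0)P'(y_0)=y_0\,W(P(y_0))$ and match powers of $y_0$: the order-$y_0^0$ balance produces $c_1=0$, the order-$y_0^1$ balance produces $c_2=W(\hat y_1)/(2a^2\hat y_1)$, and the recursion continues through order six. The main obstacle is purely computational bookkeeping — expanding the composition $W(P(y_0))$ and tracking cross-terms — not a new idea.
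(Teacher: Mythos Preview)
Your argument is correct and follows the same overall structure as the paper: rule out $a=0$ via \eqref{eq:y-1Ffora=0}, evaluate \eqref{integralF} at $y_0=0$ to obtain $cT>0$ and hence $a<0$, $T>0$, $4D-T^2>0$, identify the endpoint $\hat y_1$, and then read off the Taylor coefficients by undetermined coefficients in \eqref{eq:ode}. The two differences are minor. For the endpoint, you invoke Remark~\ref{remark:intervlas} directly, whereas the paper gives a short self-contained proof via the monotonicity inequality $\int_{y_1}^{y_0}(-y/W)\,dy<\int_{\hat y_1}^{0}(-y/W)\,dy=cT$ for $y_1\in(\hat y_1,0]$; both are valid, and indeed Remark~\ref{remark:focus1} acknowledges the endpoint statement is already available elsewhere. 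For analyticity at the origin, the paper simply cites Theorem~14 and Corollary~21 of \cite{CARMONA2021319}, while you instead observe that the right-hand side of the explicit ODE $dy_1/dy_0=y_0 W(y_1)/(y_1 W(y_0))$ is analytic at $(0,\hat y_1)$ (denominator $\hat y_1 a^2\neq 0$) and apply the analytic Cauchy--Lipschitz theorem together with uniqueness on a one-sided neighborhood. Your route is more self-contained and avoids the machinery of the external reference, at the small cost of the one-sided uniqueness step; the paper's citation is shorter but opaque without consulting \cite{CARMONA2021319}.
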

\begin{proof}
The expression given in \eqref{eq:y-1Ffora=0} provides the left Poincar\'e half-map for the case $a=0$. From there, one obtains $P(0)=0$ when $a=0$. 
 
Suppose that $0\in I$ and $P(0)=\hat{y}_1<0$. Then $a\ne 0$ and expression \eqref{integralF} leads us to 
\[
\int_{\hat{y}_1}^{0}\dfrac{-y}{W(y)}dy= cT.
\]
From Remark \ref{remark:intervlas}, the polynomial $W$ is strictly positive and, therefore, the left-hand term of the last expression is also strictly positive. If $a>0$, from expression \eqref{integralF}, $c=0$ and this is impossible. Thus, it is deduced that $a<0,T>0,$ and $4D-T^2>0$. Now, from Remark \ref{remark:intervlas} again, the intervals $I$ and $P(I)$ are unbounded and $P(y_0)$  tends to $-\infty$ as $y_0\to +\infty$.

Next, let us  prove that $\hat{y}_1$ is the  right endpoint of the interval $I$.  Let us consider $y_0\geqslant 0$ and $y_1\in (\hat{y}_1,0]$. From the inequalities 
\[
\int_{y_1}^{y_0}\dfrac{-y}{W(y)}dy<\int_{\hat{y}_1}^{y_0}\dfrac{-y}{W(y)}dy\leqslant \int_{\hat{y}_1}^{0}\dfrac{-y}{W(y)}dy=cT
\]
one can see that no point in the interval $y_1\in (\hat{y}_1,0]$ belongs to the  interval $P(I)$ and so the right endpoint of interval $P(I)$ is $\hat{y}_1$. 

The analyticity of $P$ is a direct consequence of Theorem 14 and Corollary 21 of \cite{CARMONA2021319} and the Taylor expansion around the origin given in \eqref{eq:serietaylor} follows from the method of undetermined coefficients applied to the differential equation \eqref{eq:ode}.
\end{proof}
\begin{remark}
\label{remark:focus1}
Note that the condition $P(0)=\hat{y}_1<0$ together with the linearity of the system implies that there exists an unstable focus equilibrium in the left half-plane $\Sigma^-$ (see Fig. \ref{fig:halfmaps}(c)) and so it is immediate that 
$a<0,T>0,$ and $4D-T^2>0$. This is an alternative proof for  the inequalities of Proposition \ref{prop:serietaylor}. On the other hand, the endpoints of intervals $I$ and $P(I)$ were also determined in Corollary 21 of \cite{CARMONA2021319} in a more generic way. For the sake of completeness, in the previous proof, we have included a different and specific reasoning for this case. 
\end{remark}

When there exists a point  $\hat{y}_0>0$ such that $P\left(\hat{y}_0\right)=0$, then left Poincar\'{e} half-map $P$ is a non-analytic function at $\hat{y}_0$. However, in \cite{CARMONA2021319} it is proven that the inverse function $P^{-1}$ is analytic at the origin and so it is possible, by means of an inversion, to get a Newton-Puiseux serie expansion for the left Poincar\'{e} half-map $P$ around $\hat{y}_0$. Some results about series inversion and Newton-Puiseux series can be found in 
\cite{Garcia-Barroso:2017ty} and the references therein. Also of interest are the results included in \cite{CANO2020} concerning the expression of the solutions of differential equations as Newton-Puiseux series expansion and its convergence.

\begin{proposition}\label{prop:serieNewton-Puiseux}
Assume that there exists a value $\hat{y}_0>0$ with $P\left(\hat{y}_0\right)=0$. Then, 
$a<0,T<0,$ $4D-T^2>0$, $\hat{y}_0$ is the left endpoint of the interval $I$, the inverse function $P^{-1}$ is a real analytic function, and the left Poincar\'{e} half-map $P$ admits the Newton-Puiseux serie expansion around the point $\hat{y}_0$ given by
\begin{equation}
\label{eq:serieNewton-Puiseux}
\begin{array}{ccl}
P(y_0)&=& {\displaystyle a\sqrt{\frac{2\hat{y}_0}{W(\hat{y}_0)}}\,(y_0-\hat{y}_0)^{1/2}-\frac{aT}{3}\frac{2\hat{y}_0}{W(\hat{y}_0)}(y_0-\hat{y}_0)+} \\
\noalign{\medskip} 
& & 
{\displaystyle 
\frac{a^3}{72}\left( \frac{9D+2T^2}{a^2}+
\frac{9}{\hat{y}_0^2}\right) \left(\sqrt{\frac{2\hat{y}_0}{W(\hat{y}_0)}}\right)^3 (y_0-\hat{y}_0)^{3/2}+\mathcal{O}\left((y_0-\hat{y}_0)^2\right),}
\end{array}
\end{equation}
which is valid for $y_0\geqslant \hat{y}_0$.
\end{proposition}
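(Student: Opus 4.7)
The plan is to parallel the strategy of Proposition \ref{prop:serietaylor}, but applied to $P^{-1}$ at the origin, and then to recover a Newton--Puiseux expansion for $P$ by formal series reversion. First, I would derive the sign constraints. The hypothesis $P(\hat{y}_0)=0$ with $\hat{y}_0>0$ rules out $a=0$, since formula \eqref{eq:y-1Ffora=0} shows that for $a=0$ the map $P$ is a nonzero negative multiple of $y_0$, whose only zero is $y_0=0$. With $a\neq 0$, the integral characterization \eqref{integralF} gives
\[
\int_{0}^{\hat{y}_0}\dfrac{-y}{W(y)}\,dy = cT,
\]
and Remark \ref{remark:intervlas} ensures $W>0$ on $(0,\hat{y}_0]$, so the left-hand side is strictly negative. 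Since $c=0$ when $a>0$, this forces $a<0$; then $c=2\pi/(D\sqrt{4D-T^2})$ must be a real number, which demands $4D-T^2>0$, and hence $D>0$ and $c>0$; matching signs then forces $T<0$.

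Second, I would identify $\hat{y}_0$ as the left endpoint of $I$ via the same monotonicity argument used in Proposition \ref{prop:serietaylor}: for any $y_0\in(0,\hat{y}_0)$ and any $y_1\leq 0$, the integral $\int_{y_1}^{y_0}-y/W(y)\,dy$ is strictly greater than $\int_{0}^{\hat{y}_0}-y/W(y)\,dy=cT$ (the integrand is positive on $(y_1,0)$, while on $(0,y_0)$ it is negative but larger than on $(0,\hat{y}_0)$), which prevents \eqref{integralF} from holding. To obtain analyticity of $P^{-1}$ at $0$ I would appeal to Corollary 24 of \cite{CARMONA2021319}; it suffices to rule out analyticity of $P$ at $\hat{y}_0$, and this follows from \eqref{eq:ode}, since $P'(y_0)=y_0 W(P(y_0))/(P(y_0)W(y_0))$ has numerator tending to $\hat{y}_0 a^2\neq 0$ while its denominator tends to $0$ as $y_0\searrow \hat{y}_0$.

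With $P^{-1}$ real analytic at $0$ and $P^{-1}(0)=\hat{y}_0$, I would write $P^{-1}(y_1)=\hat{y}_0+\sum_{k\geq 1}\alpha_k y_1^k$ and substitute into the rearranged equation $dy_0/dy_1=y_1 W(y_0)/(y_0 W(y_1))$. The explicit $y_1$ factor forces $\alpha_1=0$, the leading balance gives $\alpha_2=W(\hat{y}_0)/(2\hat{y}_0 a^2)$, and higher $\alpha_k$ follow recursively by undetermined coefficients. Formal reversion of $y_0-\hat{y}_0=\alpha_2 y_1^2+\alpha_3 y_1^3+\cdots$ then yields a Puiseux series $y_1=\pm\sqrt{(y_0-\hat{y}_0)/\alpha_2}\,\bigl(1+O((y_0-\hat{y}_0)^{1/2})\bigr)$, whose branch is fixed by $P(y_0)\leq 0$ together with $a<0$, making the correct leading coefficient $a\sqrt{2\hat{y}_0/W(\hat{y}_0)}<0$; expanding up to order $(y_0-\hat{y}_0)^{3/2}$ should reproduce \eqref{eq:serieNewton-Puiseux}.

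The main obstacle I anticipate is not the coefficient bookkeeping but the rigorous justification that the formal Puiseux inverse converges to $P$ near $\hat{y}_0$. This is precisely the role of the references \cite{Garcia-Barroso:2017ty} and \cite{CANO2020} cited immediately before the statement, which furnish convergent Newton--Puiseux representations for inverses of analytic germs with a simple ramification and for solutions of \eqref{eq:ode}; I would invoke them to close the proof.
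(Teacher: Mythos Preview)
Your proposal is correct and follows essentially the same route as the paper: derive the sign constraints and the left-endpoint claim by paralleling Proposition~\ref{prop:serietaylor}, establish that $P$ is non-analytic at $\hat y_0$ (equivalently $(P^{-1})'(0)=0$) so that the dichotomy from \cite{CARMONA2021319} forces $P^{-1}$ to be analytic at $0$, expand $P^{-1}$ via undetermined coefficients in \eqref{eq:ode}, and invert, selecting the branch by $P\le 0$. The only differences are cosmetic: the paper shortcuts the coefficient computation by noting that the Taylor series of $P^{-1}$ is exactly \eqref{eq:serietaylor} with $\hat y_1$ replaced by $\hat y_0$, and it treats the convergence of the inverted series as routine (the references you plan to invoke are cited only in the preamble, not in the proof itself).
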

\begin{proof}
Suppose that there exists a point $\hat{y}_0>0$ such that $P(\hat{y}_0)=0$.
An analogous reasoning to the first part of the proof of Proposition \ref{prop:serietaylor} leads to the inequalities $a<0,T<0,$ and $4D-T^2>0$ and to the fact that $\hat{y}_0$ is the left endpoint of the interval $I$.

The inverse function $P^{-1}$ satisfies $P^{-1}(0)=\hat{y}_0$ and, from differential equation \eqref{eq:ode}, it follows that its derivative at the origin vanishes. This implies that $P$ is a non-analytic function at  $\hat{y}_0$. From Theorem 14 and Corollary 21 of \cite{CARMONA2021319}, it follows that the inverse function $P^{-1}$ is an analytic function at the origin and $P^{-1}$ admits the Taylor expansion \eqref{eq:serietaylor} by changing $\hat{y}_1$ by $\hat{y}_0$.

Now, the Newton-Puiseux series expansion of $P$ is obtained by the inversion of the Taylor expansion of $P^{-1}$. Note that the direct inversion provides two possible series expansions but, since $P(y_0)\leqslant 0$ for all $y_0\in I$, the valid one is that given in \eqref{eq:serieNewton-Puiseux} and the proof is finished. 
\end{proof}

An analogous comment  to Remark \ref{remark:focus1} can be made about the inequalities of Proposition \ref{prop:serieNewton-Puiseux} and the left endpoint of $I$. The scenario described by the hypothesis stated in Proposition \ref{prop:serieNewton-Puiseux} is illustrated in Fig. \ref{fig:halfmaps}(d).

\begin{remark}
\label{remark:inversion}
The inversion used to obtain the Newton-Puiseux series expansion of $P$ is equivalent to the computation of the Taylor expansion of $Q(z_0):=P(\hat{y}_0+z_0^2)$ around $z_0=0$ and the subsequent change of $z_0$ by $\sqrt{y_0-\hat{y}_0}$. In order to get this Taylor expansion it is enough to  make the change of variable $y_0\to \hat{y}_0+z_0^2$ in the differential equation \eqref{eq:ode} to achieve a differential equation for the function $Q$.  
\end{remark}

Let us recall from Remark \ref{remark:intervlas} that when $4D-T^2>0$ the domain $I$ 
is unbounded with $P(y_0)$ 
tending to $-\infty$ as $y_0\to +\infty$. Thus, the study of the left Poincar\'{e} half-map around the infinity  is feasible. In fact, the first two terms of  the Taylor expansion of left Poincar\'{e}  half-map $P$ around the infinity were already obtained in \cite{FreireEtAl98} by means of an expression of $P$ parameterized by the flight time. In the following proposition, we present a simple method to get these and others terms. 
\begin{proposition}\label{prop:taylor-inf}
Assume that $4D-T^2>0$.Then, the Taylor expansion of left Poincar\'{e}  half-map $P$ 
around the infinity writes as
\begin{equation*}
\label{taylor-inf}
\begin{array}{ccl}
P(y_0) & = & {\displaystyle-\exp\left(\frac{\pi T}{\sqrt{4D-T^2}} \right)y_0+\frac{aT}{D}\left( 1+ \exp\left(\frac{\pi T}{\sqrt{4D-T^2}} \right) \right)-} \\
\noalign{\medskip}
& & 
{\displaystyle \frac{a^2}{D}\sinh\left(\frac{\pi T}{\sqrt{4D-T^2}} \right)\cdot\frac{1}{y_0} - }\\
\noalign{\medskip}
& &{\displaystyle \frac{a^3 e^{-\frac{2 \pi  T}{\sqrt{4 D-T^2}}} \left(-2+e^{\frac{\pi 
   T}{\sqrt{4 D-T^2}}}\right) \left(1+e^{\frac{\pi  T}{\sqrt{4
   D-T^2}}}\right)^2 T}{6 D^2}\cdot\frac{1}{y_0^2}+\mathcal{O}\left( \frac{1}{y_0^3}\right).}
\end{array}
\end{equation*}
\end{proposition}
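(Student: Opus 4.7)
The approach parallels Propositions \ref{prop:sequ}--\ref{prop:serieNewton-Puiseux}: first pin down the leading asymptotic behaviour of $P$ at infinity using the integral characterization \eqref{integralF}, and then recover the remaining coefficients by undetermined coefficients applied to the differential equation \eqref{eq:ode}. The expansion is legitimate since, under the hypothesis $4D-T^2>0$, Remark \ref{remark:intervlas} ensures that the interval of definition of $P$ is unbounded and $P(y_0)\to-\infty$ as $y_0\to+\infty$.

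For the leading coefficient, the case $a=0$ is settled directly by the exact formula \eqref{eq:y-1Ffora=0}. When $a\neq0$, the polynomial $W$ has no real roots and the integrand $-y/W(y)$ admits the elementary antiderivative
$$F(y)=-\frac{1}{2D}\log W(y)-\frac{T}{D\sqrt{4D-T^2}}\arctan\!\left(\frac{2Dy-aT}{a\sqrt{4D-T^2}}\right).$$
Letting $y_0\to+\infty$ (and hence $y_1:=P(y_0)\to-\infty$) in \eqref{integralF}, the logarithmic piece contributes $-\frac{1}{D}\log|y_0/y_1|+o(1)$, while the arctangent piece contributes $-\sgn(a)\,\pi T/(D\sqrt{4D-T^2})+o(1)$, since $\arctan(\pm\infty)=\pm\pi/2$ and the sign of the argument flips with $\sgn(a)$. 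Substituting the prescribed values of $c$ from cases (i) and (iii), both sign regimes give $P(y_0)/y_0\to c_{-1}:=-\exp(\pi T/\sqrt{4D-T^2})$, in agreement with what \eqref{eq:y-1Ffora=0} yields for $a=0$.

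For the remaining coefficients, I substitute the Laurent ansatz
$$P(y_0)=c_{-1}y_0+c_0+\frac{c_1}{y_0}+\frac{c_2}{y_0^2}+\mathcal{O}\!\left(\frac{1}{y_0^3}\right)$$
into the relation $P(y_0)\,W(y_0)\,P'(y_0)=y_0\,W(P(y_0))$ coming from \eqref{eq:ode}, expand both sides as formal Laurent series in $1/y_0$, and match the coefficients of $y_0^{2},y_0,1,1/y_0,\ldots$ in turn. This produces a triangular linear system whose unique solution determines $c_0,c_1,c_2,\ldots$ recursively. Replacing $c_{-1}$ by its explicit value and gathering exponentials via $1\pm e^{\pi T/\sqrt{4D-T^2}}$ and $\sinh(\pi T/\sqrt{4D-T^2})$ collapses the output into the form stated in the proposition.

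The hard part will be Step 1: for $a=0$ the Cauchy principal value must be invoked through \eqref{eq:PVvalue}, while for $a\neq0$ the arctangent limits at $\pm\infty$ carry a sign depending on $\sgn(a)$, so one must check that the cases (i), (ii), (iii) of \eqref{integralF} genuinely collapse to the same $c_{-1}$. Once that is in place, Step 2 is algorithmic, although the algebra at order $1/y_0^{2}$ is cumbersome and is naturally delegated to a symbolic computation package.
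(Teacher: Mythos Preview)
Your proposal is correct and reaches the same result, but the mechanics differ from the paper's in both steps.

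For the leading coefficient, you use the explicit antiderivative $F(y)=-\tfrac{1}{2D}\log W(y)-\tfrac{T}{D\sqrt{4D-T^2}}\arctan(\cdot)$ and take the limit directly in \eqref{integralF}. The paper instead splits the integral as $\int_{P(y_0)}^{-y_0}+\int_{-y_0}^{y_0}$, applies the substitution $Y=1/y$ to the first piece, and isolates the $\tfrac{1}{DY}$ part to recover the logarithm. Your route is shorter; the paper's avoids writing the antiderivative explicitly and bundles the sign bookkeeping into the single formula \eqref{eq:PVinfinito}.

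For the higher-order terms, the approaches are genuinely different. The paper does \emph{not} substitute a Laurent ansatz into \eqref{eq:ode}. It introduces the conjugate map $\tilde P(Y_0)=1/P(1/Y_0)$, observes that the change $(Y_0,Y_1)=(1/y_0,1/y_1)$ transforms \eqref{eq:ode} into the regular ODE $(a^2Y_0^2-aTY_0+D)Y_0\,dY_1-(a^2Y_1^2-aTY_1+D)Y_1\,dY_0=0$, computes the one-sided derivatives $\alpha_k=\tilde P^{(k)}(0^+)$ for $k=1,\dots,4$ from that ODE, and finally recovers $P$ via the inversion formula
\[
P(y_0)=\frac{1}{\alpha_1}y_0-\frac{\alpha_2}{2\alpha_1^2}+\frac{3\alpha_2^2-2\alpha_1\alpha_3}{12\alpha_1^3}\cdot\frac{1}{y_0}-\cdots.
\]
Your direct Laurent matching skips the auxiliary map and the inversion, so there is one fewer layer of algebra; on the other hand, the paper's detour through $\tilde P$ supplies a cleaner justification for the existence of the expansion (a regular ODE at $Y_0=0$ forces all right derivatives to exist), whereas your sentence ``the expansion is legitimate since \ldots\ the interval is unbounded'' only secures that $P(y_0)\to-\infty$, not that a Laurent series exists. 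If you want the argument to be self-contained, either cite the ODE for $\tilde P$ as the paper does, or note that $\tilde P$ solves an analytic ODE with $\tilde P'(0^+)\neq0$.
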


\begin{proof} Firstly, we shall prove the equality 
\begin{equation}
\label{eq:taylor-y1Finf}
\lim_{y_0\to +\infty}\frac{P(y_0)}{y_0}=-\exp\left(\frac{\pi T}{\sqrt{4D-T^2}}\right).
\end{equation}

If $a=0$, then expression  \eqref{eq:y-1Ffora=0} leads us directly to equality \eqref{eq:taylor-y1Finf}.

If $a\ne0$, taking into account that $W(y)>0$ for $y\in\mathbb{R}$ (see Remark \ref{remark:intervlas}), then relationship \eqref{integralF} can be written as
\begin{equation}
\label{eq:splitintegral2}
\int_{P(y_0)}^{-y_0}\dfrac{-y}{W(y)}dy+\int_{-y_0}^{y_0}\dfrac{-y}{W(y)}dy=cT,
\end{equation}
for  $y_0\in I$, being $c=0$ for $a>0$ and $c=2\pi\left(D\sqrt{4D-T^2}\right)^{-1}$ for $a<0$.

The change of variable $Y=1/y$ applied to the first integral in expression \eqref{eq:splitintegral2} transforms it into 
\[
\int_{1/P(y_0)}^{-1/y_0}\dfrac{1}{a^2Y^3-aTY^2+DY}dY+\int_{-y_0}^{y_0}\dfrac{-y}{W(y)}dy= cT 
\]
or, equivalently, into the expression
\[
\int_{1/P(y_0)}^{-1/y_0}\frac{1}{DY} dY+ \int_{1/P(y_0)}^{-1/y_0}\frac{a(T-aY)}{D\left(a^2Y^2-aTY+D \right)}dY+
\int_{-y_0}^{y_0}\dfrac{-y}{W(y)}dy=cT.
\]
That is,
\[
 \frac{P(y_0)}{y_0}= -\exp\left(DcT - D\int_{-y_0}^{y_0}\frac{-y}{W(y)}dy-
\int_{1/P(y_0)}^{-1/y_0}\frac{a(T-aY)}{a^2Y^2-aTY+D}dY\right).
\]

Now, a direct integration provides
\begin{equation}
\label{eq:PVinfinito}
\lim_{y_0\to+\infty}\int_{-y_0}^{y_0}\dfrac{-y}{W(y)}dy=-\dfrac{\pi T\sgn(a)}{D\sqrt{4D-T^2}}
\end{equation}
and taking into account that 
\[
\lim_{y_0\to +\infty} \int_{1/P(y_0)}^{-1/y_0}\frac{a(T-aY)}{a^2Y^2-aTY+D}dY=0,
\]
the equality \eqref{eq:taylor-y1Finf} follows.

Thus, the function $\tilde P,$ defined by 
\[
\tilde P(Y_0)=\left\{
\begin{array}{ccl}
{\displaystyle \frac{1}{P(1/Y_0)} }& \mbox{if}  & Y_0\ne 0 \,\, \mbox{and} \,\, 1/Y_0\in I, \\
\noalign{\medskip}
0 & \mbox{if}  & Y_0=0,
\end{array}
\right.
\]
has derivative on the right at the origin and its value is 
\begin{equation}
\label{eq:der1}
\alpha_1:=\frac{d\tilde P}{dY_0}\left(0^+\right)=-\exp\left(\frac{-\pi T}{\sqrt{4D-T^2}}\right).
\end{equation}
Moreover, it is immediate to see that the function $\tilde P$ is a solution of differential equation
\[
\left( a^2Y_0^2-aTY_0+D\right)Y_0dY_1-\left( a^2Y_1^2-aTY_1+D\right)Y_1dY_0=0,
\]
obtained from the differential equation \eqref{eq:ode} by means of the change of variables $(Y_0,Y_1)=(1/y_0,1/y_1)$ (defined for $y_0y_1\ne 0$). 
From here, it is deduced that the function $\tilde P$ has derivatives on the right of all orders at $Y_0=0$ and, after a direct computation, one finds
\begin{equation*}
\label{eq:der2}
\alpha_2:=\frac{d^2\tilde P}{dY_0^2}\left(0^+\right)=
-\frac{2a T}{D} e^{-\frac{2 \pi  T}{\sqrt{4D-T^2}}} \left(e^{\frac{\pi  T}{\sqrt{4
   D-T^2}}}+1\right),
\end{equation*}
\begin{equation*}
\label{eq:der3}
\begin{array}{l}
\alpha_3:={\displaystyle \frac{d^3\tilde P}{dY_0^3}\left(0^+\right)=} \\
\noalign{\medskip}
\qquad {\displaystyle \frac{3 a^2}{D^2} e^{-\frac{3 \pi  T}{\sqrt{4D-T^2}}} \left(e^{\frac{\pi  T}{\sqrt{4
   D-T^2}}}+1\right) \left(-2 T^2 e^{\frac{\pi  T}{\sqrt{4
   D-T^2}}}+De^{\frac{\pi  T}{\sqrt{4 D-T^2}}}-D-2
   T^2\right),}
   \end{array}
\end{equation*}
and
\begin{equation}
\label{eq:der4}
\begin{array}{l}
\alpha_4:={\displaystyle \frac{d^4\tilde P}{dY_0^4}\left(0^+\right)=} \\
\noalign{\medskip}
\qquad {\displaystyle \frac{4 a^3 T} {D^3}e^{-\frac{4 \pi  T}{\sqrt{4 D-T^2}}} \left(1+e^{\frac{\pi
    T}{\sqrt{4 D-T^2}}}\right)^2  \left(-8 D+7 D e^{\frac{\pi 
   T}{\sqrt{4 D-T^2}}}-6 T^2-6 T^2e^{\frac{\pi  T}{\sqrt{4 D-T^2}}}
   \right).}
   \end{array}
\end{equation}

Since the Taylor expansion of left Poincar\'e half-map $P$ around the infinity is given by
\begin{equation}
\label{eq:seriey-1F}
P(y_0)=\frac{1}{\alpha_1}y_0-\frac{\alpha_2}{2\alpha_1^2}+\frac{3\alpha_2^2-2\alpha_1\alpha_3}{12\alpha_1^3}\cdot\frac{1}{y_0}-\frac{3 \alpha _2^3-4 \alpha _1 \alpha _2 \alpha _3+\alpha _1^2
   \alpha _4}{24 \alpha _1^4} \cdot\frac{1}{y_0^2}+  \mathcal{O}\left(\frac{1}{y_0^3} \right),
\end{equation}
the proof concludes by substituting expressions \eqref{eq:der1}--\eqref{eq:der4} into \eqref{eq:seriey-1F}.
 \end{proof}

\section{The relative position between the graph of Poincar\'{e} half-maps and the bisector of the fourth quadrant}\label{ssec:relativeposition}

To study the relative position between the graph of the left  Poincar\'{e} half-map and the bisector of the fourth quadrant, it is natural to analyze the sign of the difference $y_0-(-P(y_0))$. In the next proposition, we show the relationship between this difference and the trace $T$. Notice that this relationship has been addressed via a case-by-case treatment (by distinguishing the spectrum of the matrix of the system) in the main results of chapter 4 of \cite{LT14}. Here, we provide a concise proof by using the integral characterization of the left Poincar\'{e} half-map.

\begin{proposition}
\label{rm:signoy0+y1}
The left Poincar\'{e} half-map $P$ satisfies the relationship
\[
\sgn\left(y_0+P(y_0) \right)=-\sgn(T) \quad \mbox{for} \quad y_0\in  I\setminus\{0\}.
\]
In addition, 
when $0\in I$ and $P(0)\neq0$ or when $T=0$, the relationship also holds for $y_0=0$.
\end{proposition}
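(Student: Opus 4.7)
The plan is to reduce everything to a non-vanishing statement for $\Psi(y_0) := y_0 + P(y_0)$ on $I\setminus\{0\}$, and then pin down its sign by continuity from a single evaluation. The case $T = 0$ is disposed of immediately by parity: then $W(y) = Dy^2 + a^2$ is even in $y$ and $-y/W(y)$ is odd, so the pair $(y_0,-y_0)$ already satisfies the characterization \eqref{integralF} with $cT = 0$; uniqueness of $P$ forces $P(y_0) = -y_0$ throughout $I$, giving $\Psi \equiv 0$ and the conclusion (including at $y_0 = 0$). From now on I assume $T \neq 0$.

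The key technical step is the evaluation of the symmetric integral
$$J(y_0) := \operatorname{PV}\left\{\int_{-y_0}^{y_0}\frac{-y}{W(y)}\,dy\right\},$$
which is well-defined because, by Remark~\ref{remark:intervlas}, $W > 0$ on $[-y_0,0)\cup(0,y_0]$. Splitting $J$ at the origin, substituting $y\mapsto -y$ on the negative half, and using the algebraic identity $W(y) - W(-y) = -2aTy$ give, for $a\neq 0$,
$$J(y_0) = -2aT\,K(y_0), \qquad K(y_0) := \int_{0}^{y_0}\frac{y^2}{W(y)W(-y)}\,dy,$$
with $K$ positive and strictly increasing; for $a = 0$ one has $J(y_0) = 0$ directly by parity.

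To show that $\Psi$ does not vanish on $I\setminus\{0\}$, suppose $P(y_0) = -y_0$ for some $y_0 > 0$ in $I$. Then $J(y_0) = cT$ forces $2aK(y_0) = -c$. If $a > 0$ this reads $K(y_0) = 0$ (since $c = 0$), which is impossible for $y_0 > 0$. If $a = 0$, the closed form \eqref{eq:y-1Ffora=0} already gives $P(y_0) = -\exp(\pi T/\sqrt{4D-T^2})\,y_0 \neq -y_0$. The main obstacle is the remaining case $a < 0$, where $c > 0$ and the forced identity is $K(y_0) = c/(2|a|)$; this is ruled out by passing to the limit $y_0\to +\infty$ in $J = -2aTK$ and invoking \eqref{eq:PVinfinito}, which yields $K(\infty) = c/(4|a|)$, so strict monotonicity of $K$ forces $K(y_0) < c/(4|a|) < c/(2|a|)$, a contradiction. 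Hence $\Psi$ is nowhere zero on $I\setminus\{0\}$.

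Finally, $P$ is continuous on the interval $I$ (analytic on $\operatorname{int}(I)$, together with the boundary expansions of Propositions~\ref{prop:sequ}--\ref{prop:serieNewton-Puiseux} and the closed form \eqref{eq:y-1Ffora=0}), so $\Psi$ has constant sign on the connected set $I\setminus\{0\}$, and one evaluation identifies that sign as $-\sgn(T)$. I would use: the leading term $-\tfrac{2T}{3a}y_0^2$ from Proposition~\ref{prop:sequ} for $a > 0$; the explicit formula \eqref{eq:y-1Ffora=0} for $a = 0$; $\Psi(0) = \hat y_1 < 0$ together with $T > 0$ from Proposition~\ref{prop:serietaylor} for $a < 0,\ T > 0$; and $\Psi(\hat y_0) = \hat y_0 > 0$ together with $T < 0$ from Proposition~\ref{prop:serieNewton-Puiseux} for $a < 0,\ T < 0$. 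The last two evaluations also provide the extension of the relationship to $y_0 = 0$ in the scenarios stated in the proposition.
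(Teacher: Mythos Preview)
Your argument is correct, and at its core it is the same as the paper's: both identify the symmetric integral $g(y_0)=\operatorname{PV}\{\int_{-y_0}^{y_0}(-y)/W(y)\,dy\}$ (your $J$) and exploit the identity $g'(u)=-2aTu^2/(W(u)W(-u))$ (equivalently your $J=-2aTK$) together with the limit~\eqref{eq:PVinfinito}. The difference is structural. The paper uses these ingredients to show in one stroke that $\sgn(cT-g(y_0))=\sgn(T)$ for every admissible $y_0>0$; since $\int_{P(y_0)}^{-y_0}(-y)/W(y)\,dy=cT-g(y_0)$ and the integrand is positive on the negative half-axis, this immediately yields $\sgn(y_0+P(y_0))=-\sgn(T)$. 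You instead split the task into (i) ruling out $cT-J(y_0)=0$ and (ii) identifying the sign afterwards via the series expansions of Propositions~\ref{prop:sequ}--\ref{prop:serieNewton-Puiseux}. Step~(ii) is an unnecessary detour: the very computation you carried out for (i) already gives the sign (e.g.\ for $a>0$ you have $cT-J=2aTK>0$ iff $T>0$, and for $a<0$ you showed $2|a|K(y_0)<c/2<c$, so again $cT-J$ has the sign of $T$). The paper's route thus avoids all appeals to the expansion propositions.

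A minor caveat on your step~(ii) as written: for $a<0$ you invoke Propositions~\ref{prop:serietaylor} and~\ref{prop:serieNewton-Puiseux} to obtain $P(0)=\hat y_1<0$ (when $T>0$) or $P(\hat y_0)=0$ with $\hat y_0>0$ (when $T<0$), but those propositions state the \emph{converse} implications. What you actually need is the description of $I$ and $P(I)$ for the focus case given in Remark~\ref{remark:intervlas}; with that citation in place your sign evaluations go through.
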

\begin{proof} 
We will prove this proposition by distinguishing the cases $T=0$ and $T\ne 0$.

For $T=0$, the integral equation given in \eqref{integralF} is reduced to
\[
PV\left\{\int_{P(y_0)}^{y_0}\dfrac{-y}{Dy^2+a^2}dy\right\}= 0, \quad \mbox{for} \quad y_0\in I.
\]
By taking into account that the integrating function is an odd function, it is direct to see that $P(y_0)=-y_0$ for  all $ y_0\in I$  and so the proposition is true for $T=0$.

Now, we focus on the proof for the case $T\neq0$ and we will consider the situations $a=0$ and $a\ne 0$. 

When $a=0$, the left Poincar\'e half-map $P$ is given by expression \eqref{eq:y-1Ffora=0} and so the equality $\sgn\left(y_0+P(y_0) \right)=-\sgn(T)$ holds for every $y_0\in I.$

When $a\ne 0$, let us consider the interval \[J=\left\{ u\in\mathbb{R}:  W(y)>0, \forall\,y\in[-|u|,|u|]\right\}\] and function $g:J\longrightarrow \mathbb{R}$ defined by
\[
g(u)=\int_{-u}^{u}\dfrac{-y}{W(y)}dy,
\]
where $W$ is the polynomial function defined in \eqref{eq:poly}. 

Notice that function g satisfies $g(0)=0$, its derivative is
\[
g'(u)=\dfrac{-2 a T u^2}{W(u)W(-u)}
\]
and so $\sgn(g'(u))=-\sgn(a T)$ for every $u\in J\setminus\{0\}$. Thus, $\sgn(g(u))=-\sgn(a T)$ for every $u\in J\cap (0,+\infty)$ and $\sgn(g(u))=\sgn(a T)$ for every $u\in J\cap (-\infty,0)$. 

Moreover, if $J=\mathbb{R} $ (i.e., when $4D-T^2>0$), then, from \eqref{eq:PVinfinito},
\[
\lim_{u\to+\infty}g(u)=-\dfrac{\pi T\sgn(a)}{D\sqrt{4D-T^2}}.
\]

The existence of the left Poincar\'{e} half-map $P$ for the case $a\ne 0$ implies $a>0$ and $c=0$ or $a<0$ and $c=2\pi\left(D\sqrt{4D-T^2}\right)^{-1}\in \mathbb{R}$. It is straightforward to see that these conditions together with the  properties of function $g$ lead to the equality 
\begin{equation}\label{eq:aux}
\sgn(cT-g(u))=\sgn(T).
\end{equation}

Let us consider $y_0\in \operatorname{int}(I)\cap J$. From equality \eqref{integralF}, one gets 
\begin{equation*}\label{splitintegral}
cT=\int_{P(y_0)}^{y_0}\dfrac{-y}{W(y)}dy=\int_{P(y_0)}^{-y_0}\dfrac{-y}{W(y)}dy+\int_{-y_0}^{y_0}\dfrac{-y}{W(y)}dy,
\end{equation*}
that is,
\[
 \int_{P(y_0)}^{-y_0}\dfrac{-y}{W(y)}dy=cT-g(y_0).
 \]
Thus, from \eqref{eq:aux},
\begin{equation*}
\label{ecu:signyo+y1andT}
\sgn\left( \int_{P(y_0)}^{-y_0}\dfrac{-y}{W(y)}dy \right)=\sgn(T)\ne 0
\end{equation*}
and, taking into account that $-y_0\cdot P(y_0)\geqslant 0$, equality $\sgn\left(y_0+P(y_0) \right)=-\sgn(T)$ holds for every $y_0\in \operatorname{int}(I)\cap J$. Therefore, the conclusion follows by using the continuity of the left Poincar\'{e} half-map and the function $y_1(y_0)=-y_0$.
\end{proof}

The next result establishes, as a direct consequence of Proposition \ref{rm:signoy0+y1}, the relationship between the graph of the left  Poincar\'{e} half-map and the bisector of the fourth quadrant.

\begin{corollary}\label{rm:bi2quad}
The following items are true.
\begin{enumerate}
\item If $T=0$, then the graph of the left Poincar\'{e} half-map $P$ of system \eqref{lienard} associated to section $\Sigma\equiv \{x=0\}$, if it exists, is included in the bisector of the fourth quadrant. 
\item If $T> 0$ (resp. $T<0$), then the graph of left Poincar\'{e} half-map $P$ of system \eqref{lienard} associated to section $\Sigma\equiv \{x=0\}$, if it exists, is located below (resp. above) the bisector of the fourth quadrant except perhaps at the origin.
\end{enumerate}
\end{corollary}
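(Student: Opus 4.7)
The plan is to read Corollary \ref{rm:bi2quad} as a direct geometric translation of Proposition \ref{rm:signoy0+y1}. The bisector of the fourth quadrant is the half-line $\{(y_0,y_1)\in\mathbb{R}^2:y_1=-y_0,\ y_0\geqslant 0\}$, so for any $y_0\in I$ the point $(y_0,P(y_0))$ lies on, below, or above this bisector according as $y_0+P(y_0)$ is zero, negative, or positive, respectively. Hence the corollary is equivalent to a statement about the sign of $y_0+P(y_0)$, which is precisely what Proposition \ref{rm:signoy0+y1} supplies.

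Concretely, for item (1) I invoke the additional clause of Proposition \ref{rm:signoy0+y1}, which asserts that when $T=0$ the equality $\sgn(y_0+P(y_0))=-\sgn(T)=0$ holds for every $y_0\in I$ (including $y_0=0$, if it belongs to $I$). Therefore $P(y_0)=-y_0$ on the whole of $I$, so the graph of $P$ is contained in the bisector. For item (2) with $T>0$, Proposition \ref{rm:signoy0+y1} yields $y_0+P(y_0)<0$ for every $y_0\in I\setminus\{0\}$, that is $P(y_0)<-y_0$, placing $(y_0,P(y_0))$ strictly below the bisector. The case $T<0$ is symmetric and produces $P(y_0)>-y_0$, putting the point strictly above.

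There is essentially no obstacle, since the corollary is a one-line unwrapping of Proposition \ref{rm:signoy0+y1}. The only bookkeeping concern is the qualifier ``except perhaps at the origin'' in item (2), which is forced by the hypothesis of Proposition \ref{rm:signoy0+y1}: the sign identity is guaranteed at $y_0=0$ only when either $T=0$ or $0\in I$ with $P(0)\neq 0$. In the remaining subcase, namely $0\in I$ with $P(0)=0$, the point $(0,0)$ sits exactly on the bisector rather than strictly below or above it, which is precisely the exception the statement allows.
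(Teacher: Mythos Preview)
Your proof is correct and follows exactly the paper's approach: the paper states the corollary as a direct consequence of Proposition \ref{rm:signoy0+y1} without giving a separate proof, and your argument is precisely the geometric unwrapping of that proposition, including the careful treatment of the ``except perhaps at the origin'' clause.
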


\section{The sign of the second derivative of Poincar\'{e} half-maps}
\label{sec:derivadasegunda}
From the differential equation given in \eqref{eq:ode}, it is easy to obtain explicit expressions for the derivatives of $P$ with respect to $y_0$. The first and second derivatives are shown in the next result. Its proof is a simple computation and so it is omitted. 
\begin{proposition}
\label{prop:derivadasprimeraysegunda}
The first and second derivatives of the left Poincar\'{e} half-map $P$ with respect to $y_0$, in the interval $\operatorname{int}(I)$, are given by 
\begin{equation*}
\label{eq:derivadaprimera}
\frac{dP}{dy_0}(y_0)=\frac{y_0W(P(y_0))}{P(y_0)W(y_0)}\,
\end{equation*}
and
\begin{equation}
\label{eq:derivadasegunda}
\frac{d^2P}{dy_0^2}(y_0)=-\frac{a^2 \left(y_0^2-\left(P(y_0)\right)^2\right)W(P(y_0))}{\left(P(y_0)\right)^3\left(W(y_0)\right)^2}\,.
\end{equation}
\end{proposition}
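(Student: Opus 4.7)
The plan is to derive both formulas directly from the differential equation \eqref{eq:ode} satisfied by $P$. Since $P$ is real analytic on $\operatorname{int}(I)$ by the results of Section \ref{ssec:orin-inf}, and since $W(y_0) \neq 0$ and $P(y_0) \neq 0$ on that open interval by Remark \ref{remark:intervlas}, I can substitute $y_1 = P(y_0)$ into the equation $y_1 W(y_0)\, dy_1 - y_0 W(y_1)\, dy_0 = 0$ and treat $dy_1/dy_0$ as the classical derivative $P'(y_0)$. This yields the identity
\begin{equation*}
P(y_0)\, W(y_0)\, P'(y_0) = y_0\, W(P(y_0)),
\end{equation*}
from which the first stated formula follows by a single division.

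For the second derivative, the plan is to differentiate the boxed identity above once more with respect to $y_0$, applying the product and chain rules. This gives
\begin{equation*}
P(y_0)\, W(y_0)\, P''(y_0) = W(P(y_0)) + \bigl(y_0 W'(P(y_0)) - P(y_0) W'(y_0)\bigr) P'(y_0) - W(y_0)\, P'(y_0)^2,
\end{equation*}
after rearranging the terms containing $P''(y_0)$. Next I would substitute the formula for $P'(y_0)$ obtained in the first step and clear denominators by multiplying through by $P(y_0)^2\, W(y_0)$.

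The algebraic heart of the argument is the identity
\begin{equation*}
y_0 W'(P(y_0)) - P(y_0) W'(y_0) = aT\bigl(P(y_0) - y_0\bigr),
\end{equation*}
which is immediate from $W(y) = Dy^2 - aTy + a^2$ and $W'(y) = 2Dy - aT$: the $D$-dependent cross terms cancel. Plugging this into the previous display, expanding $W(y_0)$ and $W(P(y_0))$, and collecting coefficients, I expect to see that the monomials proportional to $D$ and to $aT$ cancel pairwise, leaving only the contributions of the constant term $a^2$ of $W$. What survives is precisely $-a^2 \bigl(y_0^2 - P(y_0)^2\bigr)\, W(P(y_0))$ in the numerator; dividing by $P(y_0)^3\, W(y_0)^2$ produces the stated formula for $P''(y_0)$.

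There is no serious obstacle: the whole proof is a polynomial identity in $y_0$ and $P(y_0)$ driven by the fact that $W$ is quadratic. The only point requiring care is to keep track of the cancellations so that the answer really collapses to the compact form with $a^2$ in the numerator, rather than a cluttered expression involving $D$ and $T$.
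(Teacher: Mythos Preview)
Your proposal is correct and follows exactly the route implied by the paper, which simply states that ``its proof is a simple computation and so it is omitted'' --- the computation being precisely the one you outline, starting from the differential equation \eqref{eq:ode}. Your key algebraic observation that $y_0 W'(P(y_0)) - P(y_0) W'(y_0) = aT(P(y_0)-y_0)$ is the reason the $D$- and $aT$-terms cancel and only $-a^2(y_0^2-P(y_0)^2)$ survives in the bracket, exactly as you anticipate.
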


As will be stated in the next section, some interesting applications to periodic behavior of piecewise linear systems come out from the signs of the first and the second derivatives of $P$. Note that the sign of the first derivative is obvious from \eqref{eq:derivadaprimera}, because $y_0P(y_0)<0$ for $y_0\in\operatorname{int}(I)$ and the polynomial $W$ is positive (see Remark \ref{remark:intervlas}). Besides that, the sign of the second derivative of left Poincar\'{e} half-map $P$ is an immediate consequence of expression given in \eqref{eq:derivadasegunda} and Proposition \ref{rm:signoy0+y1}.

\begin{proposition}\label{prop:signderivadasegunda}
The sign of the second derivative of left Poincar\'{e} half-map $P$ is given by
\[
\sgn\left(\frac{d^2P}{dy_0^2}(y_0) \right)=-\sgn(a^2T) \quad \mbox{for} \quad y_0\in  \operatorname{int}(I).
\]
\end{proposition}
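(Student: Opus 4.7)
The plan is to read off the sign of $d^{2}P/dy_{0}^{2}$ directly from the closed-form expression \eqref{eq:derivadasegunda} by a factor-by-factor sign analysis, using Proposition \ref{rm:signoy0+y1} as the single nontrivial input. No additional computation should be required beyond what is already recorded in Proposition \ref{prop:derivadasprimeraysegunda}; the whole argument reduces to careful bookkeeping of signs.

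First I would fix $y_{0}\in\operatorname{int}(I)$, noting that necessarily $y_{0}>0$. Remark \ref{remark:intervlas} gives $W>0$ on $[P(y_{0}),0)\cup(0,y_{0}]$, so both $W(y_{0})^{2}$ and $W(P(y_{0}))$ are strictly positive. Next I would use the first-derivative expression in Proposition \ref{prop:derivadasprimeraysegunda} together with this positivity to conclude $dP/dy_{0}<0$, whence $P$ is strictly decreasing on $\operatorname{int}(I)$; this, combined with the fact that zeros of $P$ can only occur at the endpoints of $I$ (cf.\ Remark \ref{remark:intervlas} and Proposition \ref{prop:serieNewton-Puiseux}), would yield $P(y_{0})<0$ and hence $P(y_{0})^{3}<0$. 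The remaining factor $y_{0}^{2}-P(y_{0})^{2}=(y_{0}-P(y_{0}))(y_{0}+P(y_{0}))$ has a positive first factor, and its second factor has sign $-\sgn(T)$ by Proposition \ref{rm:signoy0+y1}.

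For $a\neq 0$, substituting these signs into \eqref{eq:derivadasegunda} yields $\sgn(d^{2}P/dy_{0}^{2})=-\sgn(T)=-\sgn(a^{2}T)$. The case $a=0$ I would handle separately by appealing to the explicit formula \eqref{eq:y-1Ffora=0}, which makes $P$ linear and hence $d^{2}P/dy_{0}^{2}\equiv 0$, in agreement with $-\sgn(a^{2}T)=0$. The only delicate point I anticipate is the verification that $P(y_{0})\neq 0$ on the interior of $I$---this is what prevents $P(y_{0})^{3}$ from vanishing and keeps the quotient well-defined---but, as just outlined, the strict monotonicity of $P$ together with the endpoint location of its zeros dispatches this. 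Beyond that, the result is a purely mechanical consequence of Propositions \ref{rm:signoy0+y1} and \ref{prop:derivadasprimeraysegunda}.
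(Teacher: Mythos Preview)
Your proposal is correct and follows exactly the route the paper indicates: the paper merely states that the result ``is an immediate consequence of expression given in \eqref{eq:derivadasegunda} and Proposition \ref{rm:signoy0+y1}'', and your factor-by-factor sign analysis is precisely that immediate consequence spelled out. Your extra care in verifying $P(y_0)<0$ on $\operatorname{int}(I)$ and in treating $a=0$ via \eqref{eq:y-1Ffora=0} is sound and does not deviate from the paper's intent.
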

Note that $a^2$ is written in the previous expression to include the case $a=0$.

 In previous works, the sign of the second derivative of the Poincar\'e half-maps has been addressed via case-by-case studies (see, for instance \cite{LT14}), where distinguished analyses must be employed for different values of the parameters.
Nevertheless, in Proposition \ref{prop:signderivadasegunda}, the integral characterization has allowed to obtain a closed expression for such a sign regardless the cases. As far as we know, this common expression has not been previously obtained in the literature.

\section{Some immediate consequences in piecewise linear systems}
\label{sec:consequences}
The previous results established some fundamental properties of Poincar\'e half-maps defined on a straight line for planar linear systems. These properties are essential to understand the dynamic behavior of planar piecewise linear systems. This section is devoted to provide some immediate consequences regarding periodic behavior in piecewise linear systems with two zones separated by a straight line.

From Freire et. al in \cite[Proposition 3.1]{FreireEtAl12}, we known that any piecewise linear system with two zones separated by a straight line $\Sigma$ for which a Poincar\'{e} map is well defined can be written in the  following Li\'enard canonical form 
\begin{equation}\label{cf}
\left\{\begin{array}{l}
\dot x= T_L x-y\\
\dot y= D_L x-a_L
\end{array}\right.\quad \text{for}\quad x< 0,
\quad 
\left\{\begin{array}{l}
\dot x= T_R x-y+b\\
\dot y= D_R x-a_R
\end{array}\right.\quad \text{for}\quad x> 0.
\end{equation}

Note that the points $(0,0)$ and $(0,b)$ are the tangency points between $\Sigma$ and, respectively, the flow of the left and right systems.
When $b=0$ these points coincide and the flow of system \eqref{cf} crosses the separation line transversally except at the origin. In this case, the system is called {\it sewing}.
On the contrary, for $b\ne0$  the flow of system \eqref{cf} does not cross the separation line along the segment
\[
\Sigma_s=\left\{ (0,\mu+(1-\mu)b)\in\Sigma: \mu\in(0,1)\right\},
\] 
which is usually called the {\it sliding region}.

In order to analyze the behaviour of system \eqref{cf} we consider two Poincar\'{e} half-maps associated to $\Sigma$, to wit, the {\it Forward Poincar\'{e} half-map}  $y_L: I_L\subset [0,+\infty) \longrightarrow(-\infty,0]$ and  the {\it Backward Poincar\'{e} half-map} $y_R:I_R\subset [b,+\infty)\rightarrow (-\infty,b]$. The forward one goes in the positive direction of the flow and maps a point $(0,y_0)$, with $y_0\geqslant 0$, to a point $(0,y_L(y_0))$. Analogously, the backward one goes in the negative direction of the flow and maps a point $(0,y_0)$, with $y_0\geqslant b$, to $(0,y_R(y_0))$. Notice that $y_L$ is defined by the left system and $y_R$ is defined by the right system. Naturally, $y_L=P$ by taking $T=T_L, D=D_L,$ and $a=a_L$ in system \eqref{lienard}. In addition, taking into account the change $(t,x)\to -(t,x)$, one has $y_R(y_0)=P(y_0-b)+b$ by taking $T=-T_R, D=D_R$, and $a=-a_R$. 

Evidently, the intersections between the curves $y_1=y_L(y_0)$ and $y_1=y_R(y_0),$ for $y_0\in\Int(I_L\cap I_R),$ are in bijective correspondence to crossing periodic solutions of \eqref{cf}.

From Proposition \ref{rm:signoy0+y1},
\begin{equation}\label{eq:prop5right}
T_L=0 \,(\text{resp. } T_R=0)\Rightarrow y_L(y_0)=-y_0\, (\text{resp. } y_R(y_0)=-y_0+2b),
\end{equation}
when, of course, the map $y_L$ (resp. $y_R$) exists.  Therefore, the following result follows immediately. 

\begin{corollary}\label{cor:continuum}
Assume that $T_L^2+T_R^2=0$. If $b\neq 0$, the system \eqref{cf} does not have crossing periodic orbits. If $b=0$ and $\Int(I_L\cap I_R)\neq \emptyset$, then it has a continuum of crossing periodic orbits.
\end{corollary}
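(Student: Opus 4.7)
The plan is to observe that the hypothesis $T_L^2+T_R^2=0$ forces $T_L=T_R=0$ (both traces being real), so implication \eqref{eq:prop5right} applies to both half-maps simultaneously. Whenever $y_L$ is defined one has $y_L(y_0)=-y_0$, and whenever $y_R$ is defined one has $y_R(y_0)=-y_0+2b$. Since crossing periodic orbits of \eqref{cf} are in bijection with points $y_0\in\Int(I_L\cap I_R)$ satisfying $y_L(y_0)=y_R(y_0)$, the analysis reduces to solving the scalar equation $-y_0=-y_0+2b$.

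First I would handle the case $b\ne 0$: the equation $-y_0=-y_0+2b$ simplifies to $0=2b$, which has no solution, so the graphs of $y_L$ and $y_R$ do not intersect and there are no crossing periodic orbits.

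Next I would handle the case $b=0$: the equation becomes the identity $-y_0=-y_0$, so every $y_0\in\Int(I_L\cap I_R)$ satisfies $y_L(y_0)=y_R(y_0)=-y_0$. Under the assumption $\Int(I_L\cap I_R)\ne\emptyset$, this open set of coincidence points provides a one-parameter family of crossing periodic orbits, i.e.\ a continuum.

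There is no substantial obstacle; the only point requiring a line of justification is that \eqref{eq:prop5right} applies to $y_R$, which was already recorded in the preceding discussion via the change $(t,x)\mapsto -(t,x)$ relating $y_R$ to $P$ with trace $-T_R$ (so $T_R=0$ yields $y_R(y_0)=-(y_0-b)+b=-y_0+2b$).
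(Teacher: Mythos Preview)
Your argument is correct and coincides with the paper's own reasoning: the corollary is stated there as following immediately from \eqref{eq:prop5right}, and you have spelled out precisely that immediate deduction by comparing $y_L(y_0)=-y_0$ with $y_R(y_0)=-y_0+2b$.
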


It is also possible to give some results for the case $T_L^2+T_R^2>0$. From Corollary \ref{rm:bi2quad}, if $T_L>0$ (resp. $T_L<0$), then the curve $y_1=y_L(y_0)$, if it exists, is located below (resp. above) the straight line $y_1=-y_0$ except perhaps at the origin. Analogously, if $T_R>0$ (resp. $T_R<0$), the curve $y_1=y_R(y_0)$, if it exists, is located above (resp. below) the straight line $y_1=-y_0+2b$ except perhaps at the point $(b,b)$. Hence,  also taking \eqref{eq:prop5right} into account, if $T_L>0$, $T_R\geqslant0$, and $b\geqslant0$, then
\[
y_L(y_0)<-y_0\leqslant-y_0+2b\leq y_R(y_0).
\]
Therefore, the graphs of $y_L$ and $y_R$ have no intersection points and so system \eqref{cf} has no crossing periodic orbits. The following result about non-existence of periodic orbits follows immediately via a similar reasoning.

\begin{corollary}\label{cor:nonexistence}
\label{cor:TLTR>=0}
Assume that $T_LT_R\geqslant0$
and that one of the following two non-exclusive hypotheses holds: 
\begin{itemize}
\item[1)]$T_L\neq0$ and $T_L b\geqslant0$;
\item[2)] $T_R\neq0$ and $T_R b\geqslant0$. 
\end{itemize}
Then, system \eqref{cf} does not have crossing periodic orbits.
\end{corollary}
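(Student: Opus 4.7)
The plan is to reduce the non-existence of crossing periodic orbits to an incompatibility between the positions of the two graphs $y_1=y_L(y_0)$ and $y_1=y_R(y_0)$ relative to the pair of parallel lines $y_1=-y_0$ and $y_1=-y_0+2b$, exactly in the same spirit as the argument the excerpt sketches just before the statement. Recall that crossing periodic orbits of \eqref{cf} correspond bijectively to values $y_0\in\Int(I_L\cap I_R)$ satisfying $y_L(y_0)=y_R(y_0)$, so it suffices to prove that under the hypotheses these two curves are separated.

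First I would apply Corollary \ref{rm:bi2quad} directly to $y_L=P$, with $T=T_L$, obtaining: if $T_L>0$ then $y_L(y_0)<-y_0$ on $I_L\setminus\{0\}$; if $T_L<0$ then $y_L(y_0)>-y_0$; and if $T_L=0$ then $y_L(y_0)=-y_0$. Next I would translate the same corollary to the backward half-map via the relation $y_R(y_0)=P(y_0-b)+b$ obtained from the involutive change $(t,x)\mapsto-(t,x)$, which also turns the trace $T$ into $-T_R$. Setting $z_0=y_0-b$ and using Corollary \ref{rm:bi2quad} with $T=-T_R$, the inequality $P(z_0)\lessgtr -z_0$ translates into $y_R(y_0)\gtrless -y_0+2b$; in summary: if $T_R>0$ then $y_R(y_0)>-y_0+2b$, if $T_R<0$ then $y_R(y_0)<-y_0+2b$, and if $T_R=0$ then $y_R(y_0)=-y_0+2b$.

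Now I would split according to the two hypotheses and the sign of the non-zero trace. Under hypothesis~1 with $T_L>0$, the assumption $T_LT_R\geqslant0$ forces $T_R\geqslant0$ and the assumption $T_Lb\geqslant0$ forces $b\geqslant0$. Combining the two previous paragraphs,
\[
y_L(y_0)<-y_0\leqslant -y_0+2b\leqslant y_R(y_0),
\]
where the middle inequality is $b\geqslant0$ and the last is the $T_R\geqslant0$ case (with equality only when $T_R=0$, which still yields a strict inequality overall since the first is strict). Hypothesis~1 with $T_L<0$ is symmetric: $T_R\leqslant0$, $b\leqslant0$, so
\[
y_L(y_0)>-y_0\geqslant -y_0+2b\geqslant y_R(y_0).
\]
The two subcases of hypothesis~2 are handled identically, now starting from the strict inequality on $y_R$ instead of on $y_L$. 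In every one of the four resulting subcases the graphs of $y_L$ and $y_R$ are strictly separated on $\Int(I_L\cap I_R)$, so no crossing periodic orbit can exist.

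I do not expect a real obstacle here: everything is a bookkeeping consequence of Corollary \ref{rm:bi2quad} applied to both half-maps. The only point that requires a little care is the trace swap $T\mapsto -T_R$ for the backward half-map, together with the translation by $b$, which is why the relevant reference line for $y_R$ is $y_1=-y_0+2b$ rather than $y_1=-y_0$; once this is clean, the four sign cases close the argument mechanically.
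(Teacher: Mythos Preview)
Your proposal is correct and follows exactly the approach sketched in the paper immediately before the statement: use Corollary~\ref{rm:bi2quad} (together with \eqref{eq:prop5right} for the zero-trace cases) to locate the graphs of $y_L$ and $y_R$ relative to the parallel lines $y_1=-y_0$ and $y_1=-y_0+2b$, then run the four sign subcases. The paper only writes out the subcase $T_L>0,\,T_R\geqslant0,\,b\geqslant0$ and declares the rest to follow ``via a similar reasoning''; your write-up simply makes that similar reasoning explicit.
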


By merging the information of Corollaries \ref{cor:continuum} and \ref{cor:nonexistence}, we get that if $T_LT_R\geqslant0$ and $T_L b\geqslant0$, then system \eqref{cf} either does not have crossing periodic orbits or has a continuum of crossing periodic orbits. In other words, it does not  have limit cycles.

Let us add some lines regarding the condition $T_LT_R\geqslant0$ added in Corollary \ref{cor:TLTR>=0}. Note that for the case $b=0$ it is well known  that $T_LT_R\leqslant0$ is a necessary condition for the existence of crossing periodic orbits (see, for instance, \cite{FreireEtAl12}). However, when $b\neq 0$ they could exist even for $T_LT_R>0$.  Thus, the previous result allows to remove some cases where crossing periodic solutions do not exist.

The obtention of the previous results relies only on the relative location of the graphs of the Poincar\'e half-maps, which 
is easily determined in terms of the basic parameters $T_L$, $T_R$, and $b$, by using Proposition \ref{rm:signoy0+y1}. Now, information about the shape of their graphs, revealed by Proposition \ref{prop:signderivadasegunda}, can be used to bound the number of limit cycles of piecewise linear systems (isolated crossing periodic solutions) in some generic cases.
In fact, from Proposition \ref{prop:signderivadasegunda}, a simple expression is obtained for the sign of the second derivatives
 of the Poincar\'e half-maps,
\[
\sgn\left(\frac{d^2y_L}{dy_0^2}(y_0) \right)=-\sgn(a_L^2T_L) \quad \text{and}\quad\sgn\left(\frac{d^2y_R}{dy_0^2}(y_0) \right)=\sgn(a_R^2T_R).
\]
Therefore, the concavity of the functions $y_L$ and $y_R$ is established by $a_L^2T_L$ and $a_R^2T_R$, respectively. Thus, the following result for limit cycles  follows immediately.

\begin{corollary}\label{cor:atmost2}
If $T_L\,T_R>0$, then system \eqref{cf} has at most two limit cycles.
\end{corollary}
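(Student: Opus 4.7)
The plan is to identify limit cycles of \eqref{cf} with isolated zeros of the difference
\[
f(y_0)=y_L(y_0)-y_R(y_0),\qquad y_0\in\Int(I_L\cap I_R),
\]
and to show, under the hypothesis $T_LT_R>0$, that $f$ admits at most two such zeros. Since a crossing periodic orbit of \eqref{cf} is a limit cycle precisely when the corresponding intersection between the graphs of $y_L$ and $y_R$ is isolated, this reduction is faithful and the corollary follows from bounding the number of isolated zeros of $f$ by two.

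The core of the argument is to compute the sign of $f''$ using Proposition \ref{prop:signderivadasegunda}. Applied directly to $y_L$, the proposition gives $\sgn\!\left(y_L''(y_0)\right)=-\sgn(a_L^2T_L)$. For the backward half-map, one uses the identification $y_R(y_0)=P(y_0-b)+b$, where $P$ is the left Poincar\'e half-map associated with the parameters $(-T_R,D_R,-a_R)$ obtained via the involution $(x,y,a)\leftrightarrow(-x,-y,-a)$; after differentiating twice and substituting, Proposition \ref{prop:signderivadasegunda} yields $\sgn\!\left(y_R''(y_0)\right)=-\sgn\!\left((-a_R)^2(-T_R)\right)=\sgn(a_R^2T_R)$. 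Hence, whenever $T_LT_R>0$, the two second derivatives carry opposite (non-strict) signs throughout the interior of their domains, so $f''=y_L''-y_R''$ has a single constant sign on all of $\Int(I_L\cap I_R)$.

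I would close the argument by a brief case analysis on whether $a_L$ and $a_R$ vanish. If $a_La_R\ne 0$, then $f''$ is strictly signed on the whole interval, so $f$ is strictly convex or strictly concave and therefore has at most two zeros. If exactly one of $a_L$, $a_R$ vanishes, the corresponding half-map is affine by \eqref{eq:y-1Ffora=0}, but the other contribution to $f''$ remains strictly signed, so the same bound applies. Finally, if $a_L=a_R=0$, both half-maps are affine and so is $f$; either $f$ has at most one zero, or $f\equiv 0$ on $\Int(I_L\cap I_R)$, in which case every crossing periodic orbit belongs to a continuum, none of them is isolated, and there are no limit cycles at all.

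The only delicate step I anticipate is bookkeeping rather than analytic: one must carefully transport Proposition \ref{prop:signderivadasegunda} through the involution $(x,y,a)\leftrightarrow(-x,-y,-a)$ and the shift by $b$ used to recast the right half-map as a left one, and verify that the substitution $(T,a)\mapsto(-T_R,-a_R)$ produces exactly the sign $\sgn(a_R^2T_R)$ that cancels against $\sgn(a_L^2T_L)$ under $T_LT_R>0$. Once this is settled, the bound reduces to the elementary observation that a function with constant-signed second derivative on an interval has at most two zeros there.
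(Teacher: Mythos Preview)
Your proposal is correct and follows essentially the same route as the paper: the paper simply remarks that Proposition~\ref{prop:signderivadasegunda} fixes the concavity of $y_L$ and $y_R$ via $\sgn(y_L'')=-\sgn(a_L^2T_L)$ and $\sgn(y_R'')=\sgn(a_R^2T_R)$, and declares the bound on limit cycles immediate from $T_LT_R>0$. Your write-up merely makes explicit what the paper leaves implicit---the definition of $f=y_L-y_R$, the transport of the sign formula to $y_R$ through the involution and shift, and the degenerate cases $a_L=0$ or $a_R=0$---so the two arguments coincide.
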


The upper bound given by the corollary above is reachable. Indeed, the last example provided by Han and Zhang in \cite{HanZhang10} satisfies $T_L\,T_R>0$ and has two limit cycles near the origin.

Concerning the series expansions of Poincar\'{e} half-maps provided by Propositions \ref{prop:sequ}-\ref{prop:taylor-inf}, a natural application could consist in obtaining stability properties of some singular invariant sets of piecewise linear systems under suitable assumptions. For instance, Proposition \ref{prop:sequ} can provide whether the monodromic singularity at the separation line is attracting, repelling, or a center; analogously, Proposition \ref{prop:taylor-inf} can provide whether the infinity is attracting, repelling, or a center in the case it is monodromic; finally, Propositions \ref{prop:serietaylor} and \ref{prop:serieNewton-Puiseux} can be used to study the stability of some fold-fold connections. Mixing the stability properties above, one can immediately get sufficient conditions for the existence of a limit cycle (forcing, for instance, the mondromic singularity at the discontinuity line and the infinity, in the monodromic case, to have the same stability).

\section{Conclusions}

In this paper we provided fundamental properties of Poincar\'e half-maps defined on a straight line for planar linear systems. Our analysis was based on a novel characterization of Poincar\'e half-maps \cite{CARMONA2021319}, presented in Section \ref{sec:char}. This characterization has  proven to be an effective method to study these maps from a common point of view and to obtain results in a simple way, without the need of making particularized case-by-case studies. 

We have focused on the analyticity of the Poincar\'e half-maps, their series expansions, the relative position between the graph of Poincar\'e half-maps and the bisector of the fourth quadrant, and the sign of their second derivatives. In what follows, we sumarize the obtained results.
In Section \ref{ssec:orin-inf}, we addressed the series expansion of a Poincar\'e half-map, $P: I\subset [0,+\infty) \longrightarrow(-\infty,0]$, around the extrema of its interval of definition $I$, namely: Propositions \ref{prop:sequ} and \ref{prop:serietaylor} provided the Taylor expansion of  $P$ around  the origin when $0\in I$;  Proposition \ref{prop:serieNewton-Puiseux} provided the Newton-Puiseux series expansion of $P$ around $\hat{y}_0\in I$, where  $\hat{y}_0>0$ satisfies $P\left(\hat{y}_0\right)=0$; and Proposition \ref{prop:taylor-inf} provided the Taylor series expansions of  $P$ around the infinity when $4D-T^2>0$. In Section \ref{ssec:relativeposition}, Proposition \ref{rm:signoy0+y1} and Corollary \ref{rm:bi2quad}  established a relationship between the graphs of Poincar\'e half-maps and the bisector of the fourth quadrant  depending only  on the sign of the trace $T$. Finally, in Section \ref{sec:derivadasegunda}, Proposition \ref{prop:derivadasprimeraysegunda} provided expressions for the first and second derivative of the Poincar\'e half-maps and Proposition \ref{prop:signderivadasegunda} determined the sign of the second derivative of the Poincar\'{e} half-maps.

All these properties are essential to understand the dynamic behavior of planar piecewise linear systems with two zones separated by a straight line (PPWLS, for short). Thus, in Section \ref{sec:consequences} we provided some immediate consequences regarding periodic behavior of such systems, namely: Corollary \ref{cor:continuum} established non-generic conditions for a PPWLS either not having periodic orbits and having a continuum of  crossing periodic orbits; Corollary \ref{cor:TLTR>=0} gives generic conditions for a PPWLS not having periodic orbits; finally, Corollary \ref{cor:atmost2} provided generic conditions for a PPWLS having at most two limit cycles.

The results obtained in this paper also allow deeper insights regarding periodic solutions for piecewise linear systems. For instance, in \cite{CFN22}, the present results among others were of assistance in proving that PPWLS without sliding region (that is, $b=0$) has at most one limit cycles. This result was obtained without unnecessary distinctions of spectra of the matrices. In addition, it is proven that this limit cycle, if exists, is hyperbolic and its stability is determined by a simple condition in terms of the parameters. Also, in \cite{CarmonaEtAl23}, it was provide the existence of a uniform upper bound, $L^*$, for the maximum number of limit cycles of PPWLS. The present Proposition \ref{rm:signoy0+y1} helped to show that $L^*\leq  8$.

\section*{Acknowledgements}

VC and EGM are partially supported by the Ministerio de Ciencia, Innovaci\'on y Universidades, Plan Nacional I+D+I cofinanced with FEDER funds, in the frame of the project PGC2018-096265-B-I00. FFS is partially supported by the Ministerio de Econom\'{i}a y Competitividad, Plan Nacional I+D+I cofinanced with FEDER funds, in the frame of the project MTM2017-87915-C2-1-P. VC, FFS, and EGM are partially supported by the Ministerio de Ciencia e Innovaci\'on, Plan Nacional I+D+I cofinanced with FEDER funds, in the frame of the project PID2021-123200NB-I00, the Consejer\'{i}a de Educaci\'{o}n y Ciencia de la Junta de Andaluc\'{i}a (TIC-0130, P12-FQM-1658) and by the Consejer\'{i}a de Econom\'{i}a, Conocimiento, Empresas y Universidad de la Junta de Andaluc\'{i}a (US-1380740, P20-01160). DDN is partially supported by S\~{a}o Paulo Research Foundation (FAPESP) grants 2022/09633-5, 2021/10606-0, 2018/13481-0, and 2019/10269-3, and by Conselho Nacional de Desenvolvimento Cient\'{i}fico e Tecnol\'{o}gico (CNPq) grants 438975/2018-9 and 309110/2021-1.

\section{Statements and Declarations}

\noindent Data Availability: Data sharing not applicable to this article as no datasets were generated or analysed during the current study.

\bigskip

\noindent Conflict of Interest: The authors have no conflicts of interest to declare that are relevant to the content of this article.

\bibliographystyle{abbrv}
\bibliography{references.bib}

\end{document}